\numberwithin{equation}{section}
\numberwithin{figure}{section}
\numberwithin{table}{section}
\long\def\MSC#1\EndMSC{\def\arg{#1}\ifx\arg\empty\relax\else
	{\narrower\noindent%
		{2020 Mathematics Subject Classification}: #1\\} \fi}
\long\def\PACS#1\EndPACS{\def\arg{#1}\ifx\arg\empty\relax\else
	{\narrower\noindent%
		{PACS numbers}: #1}\fi}
\long\def\KEY#1\EndKEY{\def\arg{#1}\ifx\arg\empty\relax\else
	{\narrower\noindent% 
		Keywords: #1\\}\fi}
\theoremstyle{plain}
\newtheorem{theorem}{Theorem}[section]
\newtheorem{lemma}[theorem]{Lemma}
\newtheorem{proposition}[theorem]{Proposition}
\theoremstyle{definition}
\newtheorem{definition}[theorem]{Definition}
\theoremstyle{remark}
\newtheorem{remark}[theorem]{Remark}
\newcommand{\norm}[1]{\lVert#1\rVert}
\newcommand{\abs}[1]{\lvert#1\rvert} 
\newcommand{\inner}[1]{\langle#1\rangle}
\newcommand{\dist}{\mathop{\textup{dist}}}
\newcommand{\essinf}{\mathop{\textup{ess\,inf}}}
\newcommand{\esssup}{\mathop{\textup{ess\,sup}}}
\newcommand{\R}{\mathbb{R}}
\newcommand{\N}{\mathbb{N}}
\newcommand{\di}{\mathrm{d}}   % differential
\newcommand{\A}{\mathcal{A}}
\newcommand{\X}{\mathcal{X}}
\newcommand{\Hs}{\mathcal{H}}
\begin{document}
	\title[Reconstruction of cracks in Calder\'on's problem]{Reconstruction of cracks in Calder\'on's inverse conductivity problem using energy comparisons}
	
	\author[H.~Garde]{Henrik~Garde}
	\address[H.~Garde]{Department of Mathematics, Aarhus University, Aarhus, Denmark.}
	\email{garde@math.au.dk}
	
	\author[M.~S.~Vogelius]{Michael~S.~Vogelius}
	\address[M.~S.~Vogelius]{Department of Mathematics, Rutgers University, New Brunswick, NJ, USA.}
	\email{vogelius@math.rutgers.edu}
	
	\begin{abstract}
		We derive exact reconstruction methods for cracks consisting of unions of Lipschitz hypersurfaces in the context of Calder\'on's inverse conductivity problem. Our first method obtains upper bounds for the unknown cracks, bounds that can be shrunk to obtain the exact crack locations upon verifying certain operator inequalities for differences of the local Neumann-to-Dirichlet maps. This method can simultaneously handle perfectly insulating and perfectly conducting cracks, and it appears to be the first rigorous reconstruction method capable of this. Our second method assumes that only perfectly insulating cracks or only perfectly conducting cracks are present. Once more using operator inequalities, this method generates approximate cracks that are guaranteed to be subsets of the unknown cracks that are being reconstructed. 
	\end{abstract}	
	\maketitle
	
	\KEY
	Calder\'on's problem,
	electrical impedance tomography, 
	cracks,
	inclusion detection.
	\EndKEY
	
	\MSC
	35R30, 35R05, 47H05.
	\EndMSC
	
	\section{Introduction} \label{sec:intro}
	
	For an integer $d\geq 2$, let $\Omega$ be a bounded Lipschitz domain in $\R^d$ with connected complement, and let $\Gamma\subseteq \partial \Omega$ be a non-empty relatively open subset. In the presence of a collection of cracks, $D$,  and an imposed boundary current, $f$ on $\Gamma$, the steady state voltage potential, $u$, satisfies
	\begin{align} \label{eq:condeq}
		-\nabla\cdot(\gamma_0\nabla u) &= 0 \quad \text{in } \Omega\setminus \overline{D}, \nonumber\\
		\gamma_0\frac{\partial u}{\partial \nu} &= \begin{dcases}
			f & \text{ on } \Gamma, \\
			0 & \text{ on } \partial\Omega\setminus\overline{\Gamma},
		\end{dcases} \nonumber\\
		\gamma_0\frac{\partial u}{\partial n} &= 0 \quad \text{on } D_0, \\[1mm]
		& \hspace{-1.5cm} u \text{ is locally constant on } D_\infty, \nonumber \\
	\int_{D_i} \Bigl[\gamma_0\frac{\partial u}{\partial n}\Bigr] \di S &= 0 \quad \text{for each component $D_i$ of $D_\infty$}. \nonumber
	\end{align}	
	We ``ground" $u$ by requiring that $\int_\Gamma u\,\di S = 0$. The condition on $D_0$ must be satisfied on both sides of $D_0$. $D_0$ are those cracks in the collection $D$ that are perfectly insulating, and $D_\infty$ are those that are perfectly conducting, $D=D_0\cup D_\infty$. Note that the constants on different connected components of $D_\infty$ will generally be different (and determined by the integral conditions).	$\nu$ denotes the outer unit normal on $\partial\Omega$ and $n$ denotes a unit normal on $D$. We use $[\, v \,]$ to denote the jump of a function $v$ across $D$, relative to the normal vector $n$, with the convention that
$[\, v\,]= v_--v_+$, where $v_-$ is the trace (on $D$) taken from that part of $\Omega$, which $n$ points away from, and $v_+$ is the trace (on $D$) taken from that part of $\Omega$, which $n$ points into. The imposed boundary current satisfies
	\begin{equation} \label{eq:currents}
		f \in L^2_\diamond(\Gamma) = \{ g\in L^2(\Gamma) \mid \inner{1,g} = 0 \},
	\end{equation}
	where $\inner{\,\cdot\,,\,\cdot\,}$ is the usual $L^2(\Gamma)$ inner product. $\gamma_0$ is a known background conductivity coefficient, which is defined in all of $\Omega$. 
	
	For the major part of our analysis it suffices that $\gamma_0\in L^\infty(\Omega)$ with $\essinf(\gamma_0)>0$. However, at certain points we crucially assume that the conductivity equation $\nabla \cdot(\gamma_0 \nabla v)=0$ has a weak unique continuation property (UCP) on connected open subsets of $\Omega$ and also from Cauchy data on $\Gamma$, see e.g.~\cite[Definition~3.3]{Garde2020}. For $d=2$ the UCP follows directly \cite{Alessandrini2012}, while in higher dimensions this requires some regularity of $\gamma_0$ such as $\gamma_0$ being Lipschitz continuous or piecewise analytic \cite{Jerison85,KohnVogelius85,Miller74}. At one particular point (Lemma \ref{lemma:nontrivial}) we require additional $C^2$ regularity, namely to ensure that the given cracks cannot be invisible to all possible boundary measurements. However, if it turns out that the visibility can be shown for a lower regularity (still satisfying the UCP) then the proofs of our main results adapt to this.
	
	Formally the problem introduced in \eqref{eq:condeq} can be considered a problem related to the conductivity profile given by
	\begin{equation*}
		\begin{dcases}
			\gamma_0  &\text{ in } \Omega\setminus D, \\
			0 &\text{ in } D_0, \\
			\infty &\text{ in } D_\infty,
		\end{dcases}
	\end{equation*}
which also explains our use of the notation $D_0$ and $D_\infty$. The problem of determining an internal conductivity profile from boundary data is often referred to as Calder\'on's inverse conductivity problem \cite{Calderon1980,Uhlmann2009}, and our emphasis here is therefore on a special case of this, namely: to reconstruct the unknown cracks in $\Omega$ (the set $D$) based on boundary measurements on $\Gamma \subseteq \partial \Omega$. 

In order to state the main results of this paper, it is necessary to be precise about the collection of cracks we reconstruct.

\begin{definition}{}\ \label{def:crack}
 A collection of cracks $\chi$ lies in the class $\X$  if for some $N\in\N_0$
			\begin{equation*}
				\chi = \bigcup_{i=1}^N \sigma_i,
			\end{equation*} 
			where the $\sigma_i\subset \Omega$ are $(d-1)$ dimensional connected orientable Lipschitz surfaces with non-empty Lipschitz boundary $\partial \sigma_i$, and with 
			\begin{equation*}
				\dist(\sigma_i,\sigma_j)>0  \text{ for } i\neq j \text{ and } \dist(\sigma_i,\partial \Omega)>0 \text{ for all } i.
			\end{equation*}
			%
			%We use the notation that $\overline{\chi}$ denotes both interior and boundary points of the cracks, while $\chi$ only denotes the interior points.
	\end{definition}
\vskip 7pt	
	Notice that $\dist(\chi,\partial\Omega) > 0$ and $\Omega\setminus\overline{\chi}$ is connected. In particular, we do not allow open subsets of $\Omega$ to be completely encapsulated by a crack. We refer to  $D \in \X$ as ``a $(D_0,D_\infty)$ collection of cracks'' if $D = D_0\cup D_\infty$ for $D_0,D_\infty\in\X$, $\dist(D_0,D_\infty)>0$, each crack in $D_0$ is perfectly insulating, and each crack in $D_\infty$ is perfectly conducting. Note that $D_0$ or $D_\infty$ may possibly be empty.

	Our first result is an exact monotonicity-based reconstruction method via shape approximations from above, based on shrinking test inclusions $C$ containing $D$. Whether $C$ contains $D$ or not is checked via operator inequalities related to the boundary electric power. 
	The set of admissible test inclusions are
	\begin{align*}
		\A &= \{ C \Subset \Omega \mid C \text{ is the closure of an open set, has connected }  \\
		&\hphantom{{}= \{C \subset \Omega \mid{}}\text{complement, and has Lipschitz boundary } \partial C\}.
	\end{align*}
	For $C\in\A$ we define $\Lambda_{C}^{\emptyset}$ to be the Neumann-to-Dirichlet map (the ND map) for the conductivity coefficient which equals $0$ in $C$ and $\gamma_0$ elsewhere. We define $\Lambda_{\emptyset}^C$ to be the ND map for the conductivity coefficient which equals $\infty$ in $C$ and $\gamma_0$ elsewhere. Similarly we define $\Lambda_{D_0}^{D_\infty}$ to be the ND map associated with the $(D_0,D_\infty)$ collection of cracks. For the precise definitions see section~\ref{sec:forward} or e.g.~\cite{Garde2020}. We shall actually only be concerned with a local version of these maps from $L^2_\diamond(\Gamma)$ to $L^2_\diamond(\Gamma)$. For self-adjoint operators $A,B\in\mathscr{L}(L^2_\diamond(\Gamma))$ we use the notation  $A\geq B$ if and only if $A-B$ is positive semi-definite, that is
	\begin{equation*}
		\inner{(A-B)f,f}\geq 0, \qquad \forall f\in L^2_\diamond(\Gamma).
	\end{equation*}
	Based on knowledge of the local ND map $\Lambda_{D_0}^{D_\infty} : f\mapsto u|_\Gamma$, and in particular its relation to $\Lambda_{C}^{\emptyset}$  and $\Lambda_{\emptyset}^{C}$, we can now reconstruct the cracks $D$ as follows.
	\begin{theorem} \label{thm:main}
		Let $D$ be a $(D_0,D_\infty)$ collection of cracks. Given any $C\in\A$, then
		\begin{equation*}
			D\subset C \qquad \text{if and only if} \qquad \Lambda_{C}^{\emptyset}\geq \Lambda_{D_0}^{D_\infty} \geq \Lambda_{\emptyset}^C.
		\end{equation*}
	\end{theorem}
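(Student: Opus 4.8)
The natural strategy is to prove the two directions separately, with the forward direction ($D \subset C \Rightarrow$ the operator inequalities) handled by energy/variational comparison, and the converse ($D \subset C$ fails $\Rightarrow$ one of the inequalities fails) handled by a localization/singular-solution argument that exploits the unique continuation property together with the non-invisibility input from Lemma~\ref{lemma:nontrivial}.

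For the forward implication, I would first recall the variational (Dirichlet-type and complementary/Thomson-type) characterizations of the quadratic forms $\inner{\Lambda_{D_0}^{D_\infty}f,f}$, $\inner{\Lambda_C^\emptyset f,f}$, and $\inner{\Lambda_\emptyset^C f,f}$ as established in section~\ref{sec:forward}. Perfectly insulating cracks act as obstacles that remove admissible currents (a constraint on the dual/Thomson functional, or equivalently a slit in the domain for the primal problem), while perfectly conducting cracks remove degrees of freedom from the admissible \emph{potentials} (they force local constancy). The key monotonicity principle is: enlarging the insulating set can only \emph{increase} the boundary power (fewer admissible flux fields), and enlarging the conducting set can only \emph{decrease} it (fewer admissible potentials) — this is the standard ``more holes $\Rightarrow$ higher ND energy, more short-circuits $\Rightarrow$ lower ND energy'' heuristic made rigorous via the two dual min/max principles. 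Concretely, when $D \subset C$, I would compare $\Lambda_{D_0}^{D_\infty}$ against $\Lambda_C^\emptyset$ by noting that a trial flux admissible for the $C$-insulating problem (vanishing normal component on $\partial C$) can be restricted/adapted to be admissible for the crack problem, yielding $\inner{\Lambda_{D_0}^{D_\infty}f,f} \le \inner{\Lambda_C^\emptyset f,f}$; and against $\Lambda_\emptyset^C$ by noting that a trial potential admissible for the $C$-conducting problem (constant on $C$, hence in particular on $D_\infty \subset C$ and usable on the slit $D_0$) is admissible for the crack problem, yielding $\inner{\Lambda_{D_0}^{D_\infty}f,f} \ge \inner{\Lambda_\emptyset^C f,f}$. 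Care is needed to check that the integral/flux-balance side conditions on the components of $D_\infty$ and $D^C_\infty := C$ are respected under these comparisons, but these are built into the admissible classes.

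For the converse, suppose $D \not\subset C$. Then some crack component, say a piece of $D_0$ or of $D_\infty$, has a portion lying in $\Omega \setminus C$. I would use localized ``probing'' solutions concentrated near that portion: by the UCP (both the interior version and the version from Cauchy data on $\Gamma$), together with Lemma~\ref{lemma:nontrivial} guaranteeing that no crack is invisible, one constructs boundary currents $f_k \in L^2_\diamond(\Gamma)$ whose associated potentials have energy blowing up along the exposed crack piece for the true configuration but staying controlled for the configuration where $C$ is inserted (insulating or conducting as appropriate). Comparing energies then forces $\Lambda_C^\emptyset \ge \Lambda_{D_0}^{D_\infty}$ to fail if the exposed piece is insulating, or $\Lambda_{D_0}^{D_\infty} \ge \Lambda_\emptyset^C$ to fail if it is conducting — in either case contradicting the assumed double inequality. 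This is the direction I expect to be the main obstacle: the construction of the localized solutions and the quantitative energy lower bound along a Lipschitz crack (as opposed to a smooth hypersurface) is delicate, and one must carefully separate the contributions of $D_0$ and $D_\infty$ and handle the case where the exposed piece belongs to a component that is partly inside $C$. I would expect this to rely on the monotonicity inequalities just proved (applied to well-chosen intermediate test inclusions) combined with the non-triviality/visibility lemma, rather than on an explicit singular-solution formula.
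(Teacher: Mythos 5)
Your forward direction is essentially correct and close to the paper's Proposition~\ref{prop:main1}, with one difference of route. The paper proves \emph{both} inequalities from the single primal minimization principle for the functional $J(v)=\norm{v}_*^2-2\inner{f,v|_\Gamma}$ (resp.\ $\widetilde J$): for $\Lambda_C^\emptyset\geq\Lambda_{D_0}^{D_\infty}$ it takes the solution $u$ of the crack problem, \emph{drops} the energy it carries inside $C$ to get $J(u)\geq\widetilde J(u|_{\Omega\setminus C})$, and then uses that $u|_{\Omega\setminus C}\in\Hs_C^\emptyset$ (because $D\subset C$) together with minimality of $u_0$. You instead invoke the complementary Thomson/flux principle for this inequality. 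That also works, but requires carefully formulating the admissible flux class in the presence of $D_\infty$ (across which the normal flux may jump subject only to the component-wise balance conditions); the paper's ``drop the energy in $C$'' device sidesteps the dual formulation entirely and is arguably cleaner and more uniform. Your second inequality (via trial potentials from $\Hs_\emptyset^C\subset\Hs_{D_0}^{D_\infty}$ and $\nabla u_\infty=0$ in $C$) matches the paper verbatim.

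For the converse your description of the strategy is directionally correct — contrapositive, localized probing currents, UCP, the visibility Lemma~\ref{lemma:nontrivial}, splitting cases according to whether the exposed piece lies in $D_0$ or $D_\infty$, and passing through intermediate test inclusions $V,W$ via the monotonicity of Proposition~\ref{prop:main1}. But the crucial technical step is missing, and it is the one the authors flag in the introduction as the real difficulty. What is needed is a \emph{single} sequence $(f_n)$ for which simultaneously (a) the $V$-energy of the \emph{difference} $u_{\Sigma_0,f_n}^{\Sigma_\infty}-u_{\emptyset,f_n}^{\Sigma_\infty}$ blows up, and (b) the $Y$-energy (for a slightly larger $Y\Supset W$) of the background potential $u_{\emptyset,f_n}^\emptyset$ goes to zero, which then controls $\Lambda_W^\emptyset-\Lambda_\emptyset^\emptyset$ and $\Lambda_\emptyset^\emptyset-\Lambda_\emptyset^W$ via \cite[Lemmas~5.3 and~A.1]{Garde2020}. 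Two separate existence arguments for localized potentials would not give blow-up of the \emph{difference}; the paper therefore needs the constructive form of localized potentials (Lemma~\ref{lemma:locgen}, i.e.\ \cite[Lemma~2.8]{Gebauer2008b}), fed by the range-disjointness \eqref{eq:rangecap} and the range identities of Proposition~\ref{prop:rangeindep}, which let one replace $R(L_{\Sigma_0}^{\Sigma_\infty}(V))$ and $R(L_\emptyset^{\Sigma_\infty}(Y))$ by the crack-free ranges. Your sketch (``energy blowing up for the true configuration but staying controlled when $C$ is inserted'') does not produce this simultaneity, nor the reduction to crack-free range operators, and without these the argument does not close. The ingredients you list (UCP, Lemma~\ref{lemma:nontrivial}, intermediate inclusions) are all correct, but the range-identity machinery and the constructive localization lemma are the part that actually makes the converse work, and they are absent from the proposal.
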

	\begin{proof}
		The direction ``$\Rightarrow$'' is proved in Proposition~\ref{prop:main1} in section~\ref{sec:proof_main1} with $\Sigma_0 = D_0$ and $\Sigma_\infty = D_\infty$. The other direction ``$\Leftarrow$'' is proved in Proposition~\ref{prop:main2} in section~\ref{sec:proof_main2}.
	\end{proof}
	
	Our second result provides an exact monotonicity-based reconstruction method based on generating approximate cracks $\chi$ inside $D$. By checking operator inequalities we can determine if $\chi$ is contained in $D$ or not. This result assumes that we either only have perfectly insulating cracks or only have perfectly conducting cracks.
	\begin{theorem} \label{thm:main2}
		Let $D\in \X$.
		\begin{enumerate}[\rm(i)]
			\item Given any $\chi\in\X$, then
			\begin{equation*}
				\chi\subseteq D \qquad \text{if and only if} \qquad \Lambda_{D}^{\emptyset} \geq \Lambda_{\chi}^{\emptyset}.
			\end{equation*}
			\item Given any $\chi\in\X$, then
			\begin{equation*}
				\chi\subseteq D \qquad \text{if and only if} \qquad \Lambda_{\emptyset}^{\chi} \geq \Lambda_{\emptyset}^{D}.
			\end{equation*}
		\end{enumerate}
	\end{theorem}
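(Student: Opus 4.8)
The plan is to prove (i) and (ii) in parallel; (ii) is the ``dual'' of (i), with insulating cracks replaced by perfectly conducting ones and the order of the operator inequality reversed. The ``if'' directions are pure monotonicity and rest on the variational characterizations: for $E\in\X$ and $f\in L^2_\diamond(\Gamma)$,
\[
\inner{\Lambda_{E}^{\emptyset}f,f}=\sup_{v\in H^1(\Omega\setminus\overline{E})}\Bigl(2\inner{f,v|_\Gamma}-\int_{\Omega\setminus\overline{E}}\gamma_0\abs{\nabla v}^2\,\di x\Bigr),\qquad \inner{\Lambda_{\emptyset}^{E}f,f}=\sup_{v\in V_E}\Bigl(2\inner{f,v|_\Gamma}-\int_{\Omega}\gamma_0\abs{\nabla v}^2\,\di x\Bigr),
\]
where $V_E=\{v\in H^1(\Omega):v|_E\text{ is locally constant}\}$ and both functionals are invariant under adding constants. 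If $\chi\subseteq D$ then $\Omega\setminus\overline{\chi}\supseteq\Omega\setminus\overline{D}$, so $H^1(\Omega\setminus\overline{\chi})$ embeds by restriction into $H^1(\Omega\setminus\overline{D})$ without changing the functional (the set $\overline{D}\setminus\overline{\chi}$ is Lebesgue-null); hence the $D$-supremum dominates and $\Lambda_{D}^{\emptyset}\geq\Lambda_{\chi}^{\emptyset}$. Dually, $\chi\subseteq D$ forces $V_D\subseteq V_\chi$ (each connected component of $\chi$ lies inside a single component of $D$), so now the $\chi$-supremum dominates and $\Lambda_{\emptyset}^{\chi}\geq\Lambda_{\emptyset}^{D}$.

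For the ``only if'' directions I would argue by contraposition. Suppose $\chi\not\subseteq D$. Since $D$ is relatively closed and each component of $\chi$ is a $(d-1)$-dimensional surface, there is a point $x_0\in\chi\setminus\overline{D}$ in the relative interior of one component of $\chi$; shrinking around $x_0$ produces a Lipschitz patch $\Sigma\in\X$ with $\Sigma\subseteq\chi$, $\dist(\overline{\Sigma},\overline{D})>0$ and $\dist(\overline{\Sigma},\partial\Omega)>0$. By the monotonicity just established, $\Lambda_{\chi}^{\emptyset}\geq\Lambda_{\Sigma}^{\emptyset}$ (resp.\ $\Lambda_{\emptyset}^{\chi}\leq\Lambda_{\emptyset}^{\Sigma}$), so it suffices to exhibit one $f\in L^2_\diamond(\Gamma)$ with $\inner{\Lambda_{\Sigma}^{\emptyset}f,f}>\inner{\Lambda_{D}^{\emptyset}f,f}$ in case (i), respectively $\inner{\Lambda_{\emptyset}^{\Sigma}f,f}<\inner{\Lambda_{\emptyset}^{D}f,f}$ in case (ii). Writing $u_f^{E}$ for the $E$-potential and $u_f^{\emptyset}$ for the crack-free potential, the weak formulations yield the energy--gap identities
\[
\inner{\Lambda_{E}^{\emptyset}f,f}-\inner{\Lambda_{\emptyset}^{\emptyset}f,f}=\int_{\Omega\setminus\overline{E}}\gamma_0\abs{\nabla(u_f^{E}-u_f^{\emptyset})}^2\,\di x,\qquad \inner{\Lambda_{\emptyset}^{\emptyset}f,f}-\inner{\Lambda_{\emptyset}^{E}f,f}=\int_{\Omega}\gamma_0\abs{\nabla(u_f^{\emptyset}-u_f^{E})}^2\,\di x,
\]
and, since $u_f^{\emptyset}\in H^1(\Omega)$ has matching traces from the two sides of $E$, the corrector $u_f^{E}-u_f^{\emptyset}$ (resp.\ $u_f^{\emptyset}-u_f^{E}$) solves a fixed boundary value problem driven only by the normal flux $\gamma_0\frac{\partial u_f^{\emptyset}}{\partial n}|_{E}$ (resp.\ by the trace $u_f^{\emptyset}|_{E}$) of the crack-free potential on $E$. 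Combining the identity for $E=\Sigma$ with that for $E=D$, the strict inequality to be proved becomes $\mathrm{gap}_{\Sigma}(f)>\mathrm{gap}_{D}(f)$, where $\mathrm{gap}_{E}(f)$ denotes the corresponding corrector energy.

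To produce such an $f$ I would use Runge (``localized potentials'') approximation for the crack-free operator with data on $\Gamma$. Pick a ball $B\ni x_0$ straddling $\Sigma$ with $\overline{B}\cap\overline{D}=\emptyset$ and a neighbourhood $W\supseteq D$ with $\overline{W}\cap\overline{B}=\emptyset$; because $D,\chi\in\X$, the open set $\Omega\setminus(\overline{B}\cup\overline{W})$ is connected and reaches $\partial\Omega$, so Runge approximation applies on the compact set $\overline{B}\cup\overline{W}$. Fix a $\gamma_0$-harmonic $v_1$ on $B$ whose normal flux on $\Sigma\cap B$ is not $L^2$-orthogonal to some fixed competitor adapted to $\Sigma$ in case (i) (resp.\ whose trace on $\Sigma\cap B$ is non-constant in case (ii)). For each $t>0$ choose $f_t\in L^2_\diamond(\Gamma)$ with $u_{f_t}^{\emptyset}$ close to $tv_1$ in $H^1$ near $\Sigma$ and close to $0$ in $H^1$ near $D$. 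Then $\mathrm{gap}_{D}(f_t)\to 0$, since the $D$-corrector is driven by a vanishing flux (resp.\ trace), while lower bounding $\mathrm{gap}_{\Sigma}(f_t)$ by testing its variational characterization against one fixed competitor localized near $\Sigma\cap B$ gives a quantity of order $t^{2}$; hence $\mathrm{gap}_{\Sigma}(f_t)>\mathrm{gap}_{D}(f_t)$ for $t$ large, as required.

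The crux — the step I expect to cost the most work — is the last paragraph: the Runge/localized-potentials approximation with data on $\Gamma$ for a compact set having \emph{two} components, together with the quantitative point that one can force the flux (case (i)) or the trace (case (ii)) of $u_f^{\emptyset}$ on $\Sigma$ to be \emph{large in a weak norm}, rather than merely ensuring that $u_f^{\emptyset}$ carries a lot of energy near $\Sigma$. Both ingredients are of the same nature as the tools developed for Theorem~\ref{thm:main}; the geometric hypotheses they need (connectedness of the relevant complements, positive distance to $\partial\Omega$, existence of the patch $\Sigma$) are exactly what membership in $\X$ provides, and once the energy--gap identities above are in place the remaining estimates are routine.
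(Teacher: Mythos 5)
The ``if'' direction of your proposal is essentially the paper's Proposition~3.2: both use the variational (min/max) characterisation of the Neumann-to-Dirichlet maps and the inclusion of admissible function spaces. The ``only if'' direction, however, takes a genuinely different route. The paper (Propositions~5.1, 5.4, 6.2) builds an operator-theoretic machine: it introduces the operators $L_{\Sigma_0}^{\Sigma_\infty}(V)$ whose adjoints give $\nabla u|_V$, proves range identities (Proposition~5.1), establishes a ``visibility'' lemma via layer-potential jump relations (Lemma~5.3), and then invokes the \emph{constructive} localised-potentials lemma (Lemma~2.8 of Gebauer) to produce one sequence $(f_n)$ for which the \emph{difference} of the cracked and crack-free potentials simultaneously blows up near the spurious piece and decays near $D$. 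The paper stresses that the mere \emph{existence} form of localised potentials would not control the difference. Your proposal sidesteps this: you use a Runge (existence) approximation to drive the \emph{crack-free} potential $u_f^\emptyset$ alone, and recover the needed gap blow-up through a dual-norm lower bound of $\mathrm{gap}_\Sigma(f)$ by the weak pairing of $\gamma_0\,\partial_n u_f^\emptyset|_\Sigma$ against the jump of a fixed competitor $w_0$, which scales like $t$ under your $tv_1$ target. This is a clever re-routing; it buys a more elementary, PDE-flavoured argument at the cost of two non-trivial pieces you only sketch: (1) the quantitative flux-to-gap estimates (both the upper bound $\mathrm{gap}_D(f)\lesssim\|\gamma_0\partial_n u_f^\emptyset\|_{H^{-1/2}(D)}^2$ and the competitor lower bound for $\mathrm{gap}_\Sigma$), which require integration-by-parts identities analogous to the paper's Lemma~4.1 together with trace estimates on Lipschitz surfaces; and (2) the existence of a $\gamma_0$-harmonic $v_1$ whose flux on $\Sigma\cap B$ pairs non-trivially with some $H^{1/2}$ jump --- this is the exact analogue of the paper's ``visibility'' Lemma~5.3, which needs $\gamma_0\in C^2$ for the layer-potential jump relations, and you neither prove it nor note its dependence on regularity. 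These steps are not ``routine,'' and calling them such understates the technical content; but if supplied, your argument would give a valid alternative proof of Theorem~\ref{thm:main2} (and, by symmetry, it is tailored to the pure insulating/pure conducting setting --- it would not straightforwardly yield the mixed-case Theorem~\ref{thm:main}, which is one reason the paper chose the more abstract framework).
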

	\begin{proof}
		The direction ``$\Rightarrow$'' of (i) is proved in Proposition~\ref{prop:main12}(i) in section~\ref{sec:proof_main1} with $\Sigma_0 = D$ and $\Sigma_\infty = \emptyset$, and the same direction for (ii) is proved in Proposition~\ref{prop:main12}(ii) with $\Sigma_0 = \emptyset$ and $\Sigma_\infty = D$. The other direction ``$\Leftarrow$'' for both (i) and (ii) is proved in Proposition~\ref{prop:main22} in section~\ref{sec:proof_main2}.
	\end{proof}
	
	\begin{remark}{}\
		\begin{enumerate}[(i)]
			\item Theorem~\ref{thm:main} implies that 
			\begin{equation*}
				\overline{D} = \cap \{ C\in\A \mid \Lambda_{C}^{\emptyset}\geq \Lambda_{D_0}^{D_\infty} \geq \Lambda_{\emptyset}^C\}. 
			\end{equation*}
			Moreover, as seen in the proof of Proposition~\ref{prop:main2}, if $D_0 = \emptyset$ we only need to consider $\Lambda_{\emptyset}^{D_\infty} \geq \Lambda_{\emptyset}^C$ and, likewise, if instead we have $D_\infty = \emptyset$ we only need to consider $\Lambda_{C}^{\emptyset}\geq \Lambda_{D_0}^{\emptyset}$.
			\item Theorem~\ref{thm:main2} implies that, in the perfectly insulating case
			\begin{equation*}
				D = \cup \{\chi\in\X\mid \Lambda_{D}^{\emptyset} \geq \Lambda_{\chi}^{\emptyset}\},
			\end{equation*}
			and in the perfectly conducting case
			\begin{equation*}
				D = \cup \{\chi\in\X\mid \Lambda_{\emptyset}^{\chi} \geq \Lambda_{\emptyset}^{D}\}.
			\end{equation*}
		\end{enumerate}
	\end{remark}

	\subsection{Some related results}
	
	For large classes of cracks there are optimal (non-constructive) unique identifiability results using merely two boundary measurements \cite{Alessandrini97,BryanVogelius92,FriedmanVogelius89}. However, the \emph{reconstruction of general cracks} from finitely many measurements appears to be an open problem. For spatial dimension $d=2$ there is a factorisation method \cite{BruhlHankePidcock2001} to reconstruct \emph{either} perfectly insulating cracks \emph{or} perfectly conducting cracks from an ND map. The results of \cite{BruhlHankePidcock2001} resemble the two-dimensional version of Theorem~\ref{thm:main2}, in the sense that one determines if an artificial crack is a subset of the unknown crack. For perfectly insulating cracks, probe and enclosure methods are discussed in \cite{Ikehata08, Ikehata2023}. These methods determine the location of  ``boundary singularities" associated with the cracks or the convex hull of the cracks, respectively. It should also be mentioned that these reconstruction methods assume that $\gamma_0\equiv 1$.
	
	Our results Theorem~\ref{thm:main} and Theorem~\ref{thm:main2} are applicable in spatial dimension $d\geq 2$ for very general cracks and also for non-homogeneous background conductivities. It appears that Theorem~\ref{thm:main} is the only proven method capable of reconstructing cracks with both perfectly insulating and perfectly conducting parts. Moreover, the test operators $\Lambda_C^\emptyset$ and $\Lambda_\emptyset^C$ are precisely the same test operators that are used in \cite{Garde2020,Garde2022b} for reconstructing general inclusions of positive volume (the support of conductivity perturbations on open sets). So numerical implementations of this method directly applies to cracks without requiring any modifications. Arguably Theorem~\ref{thm:main2} becomes demanding to implement numerically for $d>2$ and is likely only suitable for computations in $d=2$, while the numerical implementation of Theorem~\ref{thm:main} very naturally generalises to higher dimensions using a peeling approach as in \cite{GardeStaboulis2019}. 
	
	For inclusions of positive volume, the monotonicity of ND mappings with respect to the conductivity coefficient was used in \cite{Tamburrino2002} to give bounds on the inclusions. In \cite{Harrach10,Harrach13} this approach was proven to give an exact reconstruction method for finite perturbations to the background conductivity, also in cases with both positive and negative perturbations if lower and upper bounds are known for the perturbed conductivity. This was generalised in \cite{Garde2020}, where the perturbed conductivity could now simultaneously have parts with finite positive and negative perturbations as well as extreme parts that are perfectly insulating and perfectly conducting. Moreover, the need for lower and upper bounds was removed. Finally, in \cite{Garde2022b} the method was shown to also handle degenerate and singular perturbations based on $A_2$-Muckenhoupt weights, allowing for continuous decay to zero and growth to infinity. This is currently the most general method for reconstructing inclusions of positive volume in Calder\'on's problem based on a local ND map. Rigorous connections have also been made to practically relevant electrode models \cite{GardeStaboulis2016,GardeStaboulis2019,Harrach19,Harrach15}. 
	
	The same monotonicity-based methodology has also been used to design a reconstruction method for the partial data Calder\'on problem, for the case of layered piecewise constant conductivities \cite{Garde2020b,Garde2022}; note that this is not a finite-dimensional setting, as the piecewise constant partitioning is also reconstructed. In finite-dimensional settings this methodology has led to Lipschitz stability results in Calder\'on's problem with finitely many measurements \cite{Harrach19} and to reformulating the finite-dimensional Calder\'on problem as a convex semidefinite optimisation problem \cite{Harrach23}. The methods are based on the unique continuation principle and its connection to the localised potentials from \cite{Gebauer2008b}. For applications to other inverse problems, we refer to the list of references in \cite{Harrach19}.
	
	The proofs of Theorems~\ref{thm:main}~and~\ref{thm:main2} also rely on localised potentials. However, the approach here is quite different since the known localisation results apply to open sets, and there is no open set inside the cracks to localise in. Moreover, the monotonicity inequalities in \cite[Appendix~A]{Garde2020} become trivial in the limit of approximating cracks by open sets. Additionally, it should be noted that using approximations of cracks with open sets and applying the results from \cite{Garde2020} will not result in proofs of our main results. The key ingredient for our proofs turns out to be the localisation on open sets containing parts of the cracks, to give simultaneous blow-up of potentials for conductivity profiles both with and without the cracks, and crucially also to have blow-up for their difference. The latter is the most technically difficult to obtain, and here the constructive version of localised potentials \cite[Lemma~2.8]{Gebauer2008b} turns out to be invaluable, since the existence results for localised potentials would not necessarily give blow-up for the difference of two localised sequences of potentials. 
	
	\subsection{Outline of the paper}
	
	In section~\ref{sec:forward} we introduce the relevant forward problems and associated function spaces. In section~\ref{sec:proof_main1} we prove the direction ``$\Rightarrow$'' of Theorems~\ref{thm:main}~and~\ref{thm:main2} (the easy direction). Sections~\ref{sec:locpot1}~and~\ref{sec:locpot2} are dedicated to results about localised potentials, that are needed in section~\ref{sec:proof_main2}, where we prove the direction ``$\Leftarrow$'' of Theorems~\ref{thm:main}~and~\ref{thm:main2} (the difficult direction). 
	
	\section{Forward problems} \label{sec:forward}
	
	Let $\Sigma$ be a $(\Sigma_0,\Sigma_\infty)$ collection of cracks. Let 
	\begin{equation*}
		\Hs_{\Sigma_0}^{\Sigma_\infty} = \{ v\in H_\diamond^1(\Omega\setminus \overline{\Sigma_0}) \mid \text{$v$ is locally constant on $\Sigma_\infty$} \},
	\end{equation*}
	where the $\diamond$-symbol refers to a mean-free condition for the Dirichlet trace on $\Gamma$, as in \eqref{eq:currents}. A Poincar\'e inequality holds on $\Omega\setminus \overline{\Sigma_0}$, see for instance~\cite{Lieb2003}. As $\Sigma_0$ has $d$-Lebesgue measure zero, the gradient of $v\in\Hs_{\Sigma_0}^{\Sigma_\infty}$ (defined in $\Omega\setminus \overline{\Sigma_0}$) is equivalent to a function in $L^2(\Omega)^d$ on the entire domain $\Omega$. Hence a norm on $\Hs_{\Sigma_0}^{\Sigma_\infty}$, equivalent to the $H^1$-norm, is 
	\begin{equation*}
		\norm{v}_{*} = \Bigl(\int_{\Omega} \gamma_0\abs{\nabla v}^2\,\di x\Bigr)^{1/2},
	\end{equation*}
	with associated inner product\footnote{Throughout this paper we assume, without loss of generality, that all our solutions and function spaces are real-valued.}
	\begin{equation*}
		\inner{w,v}_{*} = \int_{\Omega} \gamma_0\nabla w\cdot\nabla v\,\di x. 
	\end{equation*}
	By the Lax--Milgram lemma, the interior electric potential $u$ in \eqref{eq:condeq}, with $D_0 = \Sigma_0$ and $D_\infty = \Sigma_\infty$, is the unique solution in $\Hs_{\Sigma_0}^{\Sigma_\infty}$ of the weak problem
	\begin{equation} \label{eq:weakprob}
		\inner{u,v}_{*} = \inner{f,v|_\Gamma}, \qquad \forall v\in\Hs_{\Sigma_0}^{\Sigma_\infty}.
	\end{equation}
	We will sometimes use the notation $u = u_{\Sigma_0,f}^{\Sigma_\infty}$ to clarify which cracks (possibly empty) are present, and which current density, $f$, is used. Define the functional $J : \Hs_{\Sigma_0}^{\Sigma_\infty} \to \R$ by
	\begin{equation*}
		J(v) = \norm{v}_*^2 - 2 \inner{f,v|_\Gamma}, \qquad v\in \Hs_{\Sigma_0}^{\Sigma_\infty}.
	\end{equation*}
	With this definition,  $u$ is also the unique minimiser of $J$ (cf. ~\cite[Remark~12.23]{Grubb} and~\cite[Theorem~1.1.2]{Ciarlet1978}). The corresponding local ND map $\Lambda_{\Sigma_0}^{\Sigma_\infty} : f\mapsto u|_\Gamma$ satisfies
	\begin{equation*}
		\inner{\Lambda_{\Sigma_0}^{\Sigma_\infty}f,f} = \inner{u|_\Gamma,f} = \inner{f,u|_\Gamma} = \norm{u}_*^2,
	\end{equation*}
	and is a compact self-adjoint operator on $L^2_\diamond(\Gamma)$.

	For $C\in\A$ and $f\in L^2_\diamond(\Gamma)$, let $u_{C,f}^\emptyset$ be the potential coming from a conductivity with perfectly insulating inclusions in $C$ and $\gamma_0$ elsewhere. Then $u_0 = u_{C,f}^\emptyset$ is the unique solution in 
	\begin{equation*}
		\Hs_C^\emptyset = H_\diamond^1(\Omega\setminus C)
	\end{equation*}
	of the variational problem
	\begin{equation*}
		\int_{\Omega\setminus C} \gamma_0 \nabla u_0\cdot\nabla v \,\di x = \inner{f,v|_\Gamma}, \qquad \forall v\in \Hs_C^\emptyset.
	\end{equation*}
	Likewise, let $u_{\emptyset,f}^C$ be the potential coming from a conductivity with perfectly conducting inclusions in $C$ and $\gamma_0$ elsewhere. Then $u_\infty = u_{\emptyset,f}^C$ is the unique solution in
	\begin{equation*}
		\Hs_\emptyset^C = \{w\in H_\diamond^1(\Omega) \mid w \text{ is locally constant in } C\}
	\end{equation*} 
	of the variational problem 
	\begin{equation*}
		\int_{\Omega\setminus C} \gamma_0 \nabla u_\infty\cdot \nabla v \,\di x = \inner{f,v|_\Gamma}, \qquad \forall v\in \Hs_\emptyset^C.
	\end{equation*}
	In terms of  the functional
	\begin{equation*}
		\widetilde{J}(v) = \int_{\Omega\setminus C} \gamma_0 \abs{\nabla v}^2\,\di x - 2 \inner{f,v|_\Gamma},
	\end{equation*}
$u_0$ is the unique minimizer of $\widetilde{J}|_{\Hs_C^\emptyset}$ and $u_\infty$ is the unique minimizer of $\widetilde{J}|_{\Hs_\emptyset^C}$. The corresponding ND maps are compact self-adjoint operators on $L^2_\diamond(\Gamma)$ and satisfy
	\begin{align*}
		\inner{\Lambda_{C}^{\emptyset} f,f} &= \inner{u_0|_\Gamma,f} = \inner{f,u_0|_\Gamma} = \int_{\Omega\setminus C} \gamma_0\abs{\nabla u_0}^2\,\di x,\\
		\inner{\Lambda_{\emptyset}^C f,f} &= \inner{u_\infty|_\Gamma,f} = \inner{f,u_\infty|_\Gamma} = \int_{\Omega\setminus C} \gamma_0\abs{\nabla u_\infty}^2\,\di x.
	\end{align*}
	
	\section{The direction ``$\Rightarrow$'' in Theorems~\ref{thm:main} \& \ref{thm:main2}} \label{sec:proof_main1}
	
	Below we prove the ``easy direction'' of the \emph{if and only if} statements of Theorems~\ref{thm:main} and \ref{thm:main2}.
	
	\begin{proposition} \label{prop:main1}
		Let $\Sigma$ be a $(\Sigma_0,\Sigma_\infty)$ collection of cracks. Given any $C\in\A$, then
		\begin{equation*}
			\Sigma\subset C \qquad \text{implies} \qquad \Lambda_{C}^{\emptyset}\geq \Lambda_{\Sigma_0}^{\Sigma_\infty} \geq \Lambda_{\emptyset}^C.
		\end{equation*}
	\end{proposition}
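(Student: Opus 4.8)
The plan is to compare the energies (quadratic forms of the ND maps) using the variational/minimization characterizations that were just set up in section~\ref{sec:forward}. The main point is that each ND map's quadratic form equals the minimum of an appropriate Dirichlet-type functional over an appropriate admissible space, and when $\Sigma \subset C$ these admissible spaces are nested in a way that forces the desired ordering of the minima.

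First I would prove the inequality $\Lambda_C^\emptyset \geq \Lambda_{\Sigma_0}^{\Sigma_\infty}$. Fix $f \in L^2_\diamond(\Gamma)$ and let $u_0 = u_{C,f}^\emptyset \in \Hs_C^\emptyset = H^1_\diamond(\Omega \setminus C)$. Since $\Sigma \subset C$ and $C$ is the closure of an open set, $\Sigma_0 \subset C$ so $u_0$ (extended by its boundary values, or simply viewed on $\Omega \setminus \overline{\Sigma_0}$) can be taken as an element of $\Hs_{\Sigma_0}^{\Sigma_\infty}$; here one uses that $u_0$ is $H^1$ on $\Omega \setminus C$ and that being ``locally constant on $\Sigma_\infty$'' is automatic because $\Sigma_\infty \subset C$ and $u_0$ carries no data there — more precisely one extends $u_0$ into $C$ as a constant on each component of $C$ (possible since $\partial C$ is Lipschitz and $C$ has connected complement), landing in $\Hs_{\Sigma_0}^{\Sigma_\infty}$, with the $\Omega\setminus C$ part of its energy unchanged and the $C$ part zero. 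Then by the minimizing property of $u_{\Sigma_0,f}^{\Sigma_\infty}$ for $J$,
\begin{equation*}
	-\inner{\Lambda_{\Sigma_0}^{\Sigma_\infty}f,f} = J(u_{\Sigma_0,f}^{\Sigma_\infty}) \leq J(u_0) = \int_\Omega \gamma_0 \abs{\nabla u_0}^2\,\di x - 2\inner{f,u_0|_\Gamma} = -\int_{\Omega\setminus C}\gamma_0\abs{\nabla u_0}^2\,\di x = -\inner{\Lambda_C^\emptyset f,f},
\end{equation*}
using $\inner{f,u_0|_\Gamma} = \int_{\Omega\setminus C}\gamma_0\abs{\nabla u_0}^2$ and the extension having zero gradient in $C$. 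Rearranging gives $\inner{\Lambda_C^\emptyset f,f} \leq \inner{\Lambda_{\Sigma_0}^{\Sigma_\infty}f,f}$ — wait, that is the wrong direction — so I will instead argue the other way: use $u_{\Sigma_0,f}^{\Sigma_\infty}$ itself as a competitor for $\Lambda_C^\emptyset$. The correct comparison is that the admissible class for $\Lambda_C^\emptyset$, namely $\Hs_C^\emptyset$, sits inside $\Hs_{\Sigma_0}^{\Sigma_\infty}$ (via the constant extension into $C$), so minimizing $J$ over the larger space $\Hs_{\Sigma_0}^{\Sigma_\infty}$ gives a \emph{smaller} minimum, i.e.\ $-\inner{\Lambda_{\Sigma_0}^{\Sigma_\infty}f,f} \le -\inner{\Lambda_C^\emptyset f,f}$, which is exactly $\Lambda_C^\emptyset \ge \Lambda_{\Sigma_0}^{\Sigma_\infty}$.

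For the other inequality $\Lambda_{\Sigma_0}^{\Sigma_\infty} \geq \Lambda_\emptyset^C$, I would run the symmetric argument: the space $\Hs_\emptyset^C = \{w \in H^1_\diamond(\Omega) : w \text{ locally constant in } C\}$ and the space $\Hs_{\Sigma_0}^{\Sigma_\infty} = \{v \in H^1_\diamond(\Omega\setminus\overline{\Sigma_0}) : v \text{ locally constant on }\Sigma_\infty\}$. Given $w = u_{\emptyset,f}^C$, its restriction to $\Omega \setminus \overline{\Sigma_0}$ lies in $\Hs_{\Sigma_0}^{\Sigma_\infty}$ because $w$ is globally $H^1$ (so in particular $H^1$ away from the measure-zero set $\Sigma_0$) and $w$ is locally constant on $C \supset \Sigma_\infty$. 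Since $\gamma_0$ has the same values in both problems and the restriction does not change the gradient a.e., one gets $J(u_{\Sigma_0,f}^{\Sigma_\infty}) \leq J\big(w|_{\Omega\setminus\overline{\Sigma_0}}\big) = \widetilde{J}(w)$, and the minimizing property of $w$ for $\widetilde{J}$ over $\Hs_\emptyset^C$ yields $-\inner{\Lambda_{\Sigma_0}^{\Sigma_\infty}f,f} \le -\inner{\Lambda_\emptyset^C f,f}$, i.e.\ $\Lambda_{\Sigma_0}^{\Sigma_\infty} \ge \Lambda_\emptyset^C$. Here one must check that $\widetilde{J}(w) = J(w|_{\Omega\setminus\overline{\Sigma_0}})$, which holds because $\nabla w = 0$ a.e.\ on $C$ so $\int_{\Omega\setminus C}\gamma_0|\nabla w|^2 = \int_\Omega \gamma_0 |\nabla w|^2$, and the $\Sigma_0$ part has measure zero.

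**The main obstacle** I anticipate is the bookkeeping around the function spaces and the extension/restriction maps: one needs to verify carefully that the constant extension of an $H^1_\diamond(\Omega\setminus C)$ function into each component of $C$ produces a genuine element of $H^1_\diamond(\Omega\setminus\overline{\Sigma_0})$ (using the Lipschitz regularity of $\partial C$, connectedness of $\Omega\setminus C$, and $\dist(\Sigma_0,\partial\Omega)>0$ so the $\diamond$-condition on $\Gamma$ is unaffected), that the ``locally constant on $\Sigma_\infty$'' constraint is genuinely satisfied rather than vacuous, and that the energy identities $\inner{\Lambda\,f,f} = \int \gamma_0|\nabla u|^2$ match up after extension/restriction. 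None of these steps is deep — they are exactly the kind of compatibility facts guaranteed by Definition~\ref{def:crack} and the setup of section~\ref{sec:forward} — but getting the direction of each inequality right (larger admissible space $\Rightarrow$ smaller minimum of $J$ $\Rightarrow$ larger value of $\inner{\Lambda f,f}$) is the place where a sign error would most easily creep in, so I would state the nesting $\Hs_C^\emptyset \hookrightarrow \Hs_{\Sigma_0}^{\Sigma_\infty} \hookleftarrow \Hs_\emptyset^C$ explicitly and derive both inequalities from it in one stroke.
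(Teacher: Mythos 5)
Your argument for the second inequality $\Lambda_{\Sigma_0}^{\Sigma_\infty}\ge\Lambda_\emptyset^C$ is correct and is the same as the paper's: $u_\infty|_{\Omega\setminus\overline{\Sigma_0}}\in\Hs_{\Sigma_0}^{\Sigma_\infty}$, $\nabla u_\infty\equiv 0$ in $C$ so $J(u_\infty|_{\Omega\setminus\overline{\Sigma_0}})=\widetilde J(u_\infty)$, and the minimality of $u$ for $J$ does the rest. The argument for the first inequality, however, has a genuine gap.

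The embedding $\Hs_C^\emptyset\hookrightarrow\Hs_{\Sigma_0}^{\Sigma_\infty}$ ``via constant extension into $C$'' does not exist. An element $u_0\in H^1_\diamond(\Omega\setminus C)$ has a trace on $\partial C$ that is in general nonconstant, so extending $u_0$ by a constant on each component of $C$ produces a jump across $\partial C$; the result is not in $H^1(\Omega\setminus\overline{\Sigma_0})$. (This is precisely why the perfectly insulating case behaves differently from the perfectly conducting one: for $\Hs_\emptyset^C$ the functions are globally $H^1$ and flat inside $C$, so restriction preserves the energy exactly, whereas for $\Hs_C^\emptyset$ there is no such compatibility across $\partial C$.) Moreover, even if the embedding did exist, the conclusion you draw from it has a sign error: if $\Hs_C^\emptyset\subseteq\Hs_{\Sigma_0}^{\Sigma_\infty}$ and the energies matched, then $\min_{\Hs_{\Sigma_0}^{\Sigma_\infty}}J\le\min_{\Hs_C^\emptyset}\widetilde J$, i.e.\ $-\inner{\Lambda_{\Sigma_0}^{\Sigma_\infty}f,f}\le-\inner{\Lambda_C^\emptyset f,f}$, which says $\Lambda_{\Sigma_0}^{\Sigma_\infty}\ge\Lambda_C^\emptyset$ --- the reverse of what is wanted. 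You flagged exactly this worry in your last paragraph, and it did materialize.

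The correct comparison for the first inequality goes in the opposite direction and uses a one-sided energy inequality rather than energy preservation: restrict $u=u_{\Sigma_0,f}^{\Sigma_\infty}$ to $\Omega\setminus C$. Since $\overline{\Sigma_0}\subset C$, the restriction is an honest element of $\Hs_C^\emptyset=H^1_\diamond(\Omega\setminus C)$ (no extension is needed, only truncation). Then
\begin{equation*}
-\inner{\Lambda_{\Sigma_0}^{\Sigma_\infty}f,f}=J(u)=\int_\Omega\gamma_0\abs{\nabla u}^2\,\di x-2\inner{f,u|_\Gamma}\ge\int_{\Omega\setminus C}\gamma_0\abs{\nabla u}^2\,\di x-2\inner{f,u|_\Gamma}=\widetilde J\bigl(u|_{\Omega\setminus C}\bigr)\ge\widetilde J(u_0)=-\inner{\Lambda_C^\emptyset f,f},
\end{equation*}
where the first inequality discards the nonnegative Dirichlet energy of $u$ inside $C$ and the second is the minimality of $u_0$. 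This yields $\Lambda_C^\emptyset\ge\Lambda_{\Sigma_0}^{\Sigma_\infty}$. So the clean nesting picture should be $\Hs_\emptyset^C\hookrightarrow\Hs_{\Sigma_0}^{\Sigma_\infty}\to\Hs_C^\emptyset$, the last arrow being restriction (which shrinks the energy), not the chain $\Hs_C^\emptyset\hookrightarrow\Hs_{\Sigma_0}^{\Sigma_\infty}\hookleftarrow\Hs_\emptyset^C$ you proposed.
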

	\begin{proof}		
		For $f\in L_\diamond^2(\Gamma)$ denote $u_0 = u_{C,f}^\emptyset$, $u_\infty = u_{\emptyset,f}^C$, and $u = u_{\Sigma_0,f}^{\Sigma_\infty}$. Since $\Sigma\subset C$ we have $u|_{\Omega\setminus C}\in\Hs_C^\emptyset$ and $u_\infty|_{\Omega\setminus \overline{\Sigma_0}} \in \Hs_{\Sigma_0}^{\Sigma_\infty}$. We now compare the ND maps, while taking the related minimization properties from section~\ref{sec:forward} into account
		\begin{align*}
			-\inner{\Lambda_{\Sigma_0}^{\Sigma_\infty}f,f} &= \int_{\Omega} \gamma_0\abs{\nabla u}^2\,\di x - 2\inner{f,u|_\Gamma} \\
			&\geq \int_{\Omega\setminus C} \gamma_0\abs{\nabla u}^2\,\di x - 2\inner{f,u|_\Gamma} \\
			&\geq \int_{\Omega\setminus C} \gamma_0\abs{\nabla u_0}^2\,\di x - 2\inner{f,u_0|_\Gamma} \\
			&= -\inner{\Lambda_C^\emptyset f,f},
		\end{align*}
		and
		\begin{align*}
			-\inner{\Lambda_\emptyset^C f,f} &= \int_{\Omega\setminus C} \gamma_0\abs{\nabla u_\infty}^2\,\di x - 2\inner{f,u_\infty|_\Gamma} \\
			&= \int_{\Omega} \gamma_0\abs{\nabla u_\infty}^2\,\di x - 2\inner{f,u_\infty|_\Gamma} \\
			&\geq \int_{\Omega} \gamma_0\abs{\nabla u}^2\,\di x - 2\inner{f,u|_\Gamma} \\
			&= -\inner{\Lambda_{\Sigma_0}^{\Sigma_\infty} f,f}.
		\end{align*}
		Here we used that $\nabla u_\infty \equiv 0$ in the interior of $C$.
	\end{proof}

	\begin{proposition} \label{prop:main12}
		Let $\Sigma$ be a $(\Sigma_0,\Sigma_\infty)$ collection of cracks.
		\begin{enumerate}[\rm(i)]
			\item Given any $\chi\in\X$, then
			\begin{equation*}
				\chi\subseteq \Sigma_0 \qquad \text{implies} \qquad \Lambda_{\Sigma_0}^{\Sigma_\infty} \geq \Lambda_{\chi}^{\Sigma_\infty}.
			\end{equation*}
			\item Given any $\chi\in\X$, then
			\begin{equation*}
				\chi\subseteq \Sigma_\infty \qquad \text{implies} \qquad \Lambda_{\Sigma_0}^{\chi} \geq \Lambda_{\Sigma_0}^{\Sigma_\infty}.
			\end{equation*}
		\end{enumerate}
	\end{proposition}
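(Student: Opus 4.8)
The plan is to run the same energy-comparison argument as in the proof of Proposition~\ref{prop:main1}, using the variational characterisation of the local ND maps from section~\ref{sec:forward}: for any $(\Sigma_0,\Sigma_\infty)$ collection of cracks one has
\begin{equation*}
-\inner{\Lambda_{\Sigma_0}^{\Sigma_\infty}f,f} = \min_{v\in\Hs_{\Sigma_0}^{\Sigma_\infty}} J(v), \qquad J(v) = \int_\Omega \gamma_0\abs{\nabla v}^2\,\di x - 2\inner{f,v|_\Gamma},
\end{equation*}
where $\nabla v$ denotes the $L^2(\Omega)^d$ extension of the gradient (well-defined because the insulating part has Lebesgue measure zero). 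Thus a monotonicity inequality $\Lambda\geq\Lambda'$ between two such maps reduces to $\min J \leq \min J'$ for the two minimisation problems, and this follows whenever the admissible space for $\Lambda'$ embeds into that for $\Lambda$ in a way that leaves $J$ unchanged. First I would note that in both (i) and (ii) the pairs $(\chi,\Sigma_\infty)$ and $(\Sigma_0,\chi)$ are again legitimate $(\cdot,\cdot)$ collections of cracks, since $\chi\in\X$ and $\dist(\chi,\Sigma_\infty)\geq\dist(\Sigma_0,\Sigma_\infty)>0$ (resp.\ $\dist(\Sigma_0,\chi)\geq\dist(\Sigma_0,\Sigma_\infty)>0$), so all ND maps in the statement are defined.

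For (i): from $\chi\subseteq\Sigma_0$ we get $\Omega\setminus\overline{\Sigma_0}\subseteq\Omega\setminus\overline{\chi}$, so restriction maps $\Hs_{\chi}^{\Sigma_\infty}$ into $\Hs_{\Sigma_0}^{\Sigma_\infty}$: the mean-free condition on $\Gamma$ and the ``locally constant on $\Sigma_\infty$'' condition are untouched (recall $\dist(\Sigma_0,\partial\Omega)>0$ and $\dist(\Sigma_0,\Sigma_\infty)>0$), and the $L^2(\Omega)^d$-extended gradient is unchanged because $\Sigma_0\setminus\chi$ is Lebesgue-null, so $J$ is preserved. Feeding the minimiser $u_{\chi,f}^{\Sigma_\infty}$ of the right-hand problem into the left-hand one gives $\min_{\Hs_{\Sigma_0}^{\Sigma_\infty}}J \leq \min_{\Hs_{\chi}^{\Sigma_\infty}}J$, i.e.\ $\inner{\Lambda_{\Sigma_0}^{\Sigma_\infty}f,f}\geq\inner{\Lambda_{\chi}^{\Sigma_\infty}f,f}$ for every $f\in L^2_\diamond(\Gamma)$, which is the claim.

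For (ii): here the underlying domain $\Omega\setminus\overline{\Sigma_0}$ is the same for both spaces, and since $\chi\subseteq\Sigma_\infty$ every connected component of $\chi$ lies inside a single connected component of $\Sigma_\infty$; hence any $v$ that is locally constant on $\Sigma_\infty$ is automatically locally constant on $\chi$, giving the genuine inclusion $\Hs_{\Sigma_0}^{\Sigma_\infty}\subseteq\Hs_{\Sigma_0}^{\chi}$ with $J$ unchanged. Minimising over the larger space gives $\min_{\Hs_{\Sigma_0}^{\chi}}J \leq J(u_{\Sigma_0,f}^{\Sigma_\infty}) = \min_{\Hs_{\Sigma_0}^{\Sigma_\infty}}J$, i.e.\ $\inner{\Lambda_{\Sigma_0}^{\chi}f,f}\geq\inner{\Lambda_{\Sigma_0}^{\Sigma_\infty}f,f}$ for all $f$, as desired.

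I do not expect a genuine obstacle: this is the ``easy direction'' and amounts to monotonicity of a minimum over nested admissible classes, exactly as in Proposition~\ref{prop:main1}. The only places calling for a little care are the bookkeeping ones flagged above --- confirming the new crack pairs are admissible, checking that passing to a smaller domain does not alter the energy (the null-set argument), and, in (ii), observing that it is the inclusion $\chi\subseteq\Sigma_\infty$ at the level of connected components that lets the ``locally constant'' constraint transfer.
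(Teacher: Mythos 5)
Your proof is correct and follows essentially the same route as the paper: in (i) you restrict elements of $\Hs_{\chi}^{\Sigma_\infty}$ into the larger admissible class $\Hs_{\Sigma_0}^{\Sigma_\infty}$ (the paper feeds $u_{\chi,f}^{\Sigma_\infty}|_{\Omega\setminus\overline{\Sigma_0}}$ into the variational problem), and in (ii) you use the inclusion $\Hs_{\Sigma_0}^{\Sigma_\infty}\subseteq\Hs_{\Sigma_0}^{\chi}$, both times comparing minima of the same energy functional. The extra bookkeeping you include (admissibility of the new crack pairs, preservation of the $L^2$-extended gradient, and the component-wise argument for the locally constant constraint) is correct and only makes explicit what the paper leaves implicit.
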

	\begin{proof}		
		For $f\in L_\diamond^2(\Gamma)$ denote $u = u_{\Sigma_0,f}^{\Sigma_\infty}$. 
		
		Proof of (i): We denote $u_1 = u_{\chi,f}^{\Sigma_\infty}$. Since $\chi\subseteq \Sigma_0$ we have $u_1|_{\Omega\setminus \overline{\Sigma_0}}\in\Hs_{\Sigma_0}^{\Sigma_\infty}$. Using the related minimization problems for $u$ and $u_1$, we have
		\begin{equation*}
			-\inner{\Lambda_{\chi}^{\Sigma_\infty}f,f} = \norm{u_1}_*^2 - 2\inner{f,u_1|_\Gamma} \geq \norm{u}_*^2 - 2\inner{f,u|_\Gamma} = -\inner{\Lambda_{\Sigma_0}^{\Sigma_\infty} f,f}.
		\end{equation*}
		
		Proof of (ii):  We denote $u_2 = u_{\Sigma_0,f}^{\chi}$. Since $\chi\subseteq \Sigma_\infty$ we have $u\in\Hs_{\Sigma_0}^{\chi}$. Using the related minimization problems for $u$ and $u_2$, we have
		\begin{equation*}
			-\inner{\Lambda_{\Sigma_0}^{\Sigma_\infty}f,f} = \norm{u}_*^2 - 2\inner{f,u|_\Gamma} \geq \norm{u_2}_*^2 - 2\inner{f,u_2|_\Gamma} = -\inner{\Lambda_{\Sigma_0}^{\chi} f,f}. \qedhere
		\end{equation*}
	\end{proof}

	\section{Some lemmas in preparation for localised potentials} \label{sec:locpot1}
	
	If $\Sigma$ is a $(\Sigma_0,\Sigma_\infty)$ collection of cracks  then we have 
	\begin{equation*}
		\Hs_\emptyset^{\Sigma_\infty} \subseteq \Hs_{\Sigma_0}^{\Sigma_\infty} \subseteq \Hs_{\Sigma_0}^{\emptyset},
	\end{equation*}
	in the sense that $v\in \Hs_\emptyset^{\Sigma_\infty}$ satisfies $v|_{\Omega\setminus \overline{\Sigma_0}} \in \Hs_{\Sigma_0}^{\Sigma_\infty}$. Moreover, these are all Hilbert spaces with the same inner product $\inner{\,\cdot\,,\,\cdot\,}_*$.
	
	Let $P$ and $P^\perp$ be the orthogonal projections of $\Hs_{\Sigma_0}^{\Sigma_\infty}$ onto $\Hs_\emptyset^{\Sigma_\infty}$ and $(\Hs_\emptyset^{\Sigma_\infty})^\perp$, respectively, in the $\inner{\,\cdot\,,\,\cdot\,}_*$ inner product. Likewise we let $Q$ and $Q^\perp$ denote the orthogonal projections of $\Hs_{\Sigma_0}^{\emptyset}$ onto $\Hs_{\Sigma_0}^{\Sigma_\infty}$ and $(\Hs_{\Sigma_0}^{\Sigma_\infty})^\perp$, respectively, in the $\inner{\,\cdot\,,\,\cdot\,}_*$ inner product.
	
	\begin{lemma} \label{lemma:NDdiff}
		Let $\Sigma$ be a $(\Sigma_0,\Sigma_\infty)$ collection of cracks and let the projections $P$, $P^\perp$, $Q$, and $Q^\perp$ be given as above. For $f\in L^2_\diamond(\Gamma)$ we denote $u_0 = u_{\Sigma_0,f}^{\emptyset}$, $u_\infty = u_{\emptyset,f}^{\Sigma_\infty}$, and $u = u_{\Sigma_0,f}^{\Sigma_\infty}$.
		\begin{enumerate}[\rm(i)]
			\item We have $u_\infty = Pu$ and 
			\begin{equation*}
				\inner{(\Lambda_{\Sigma_0}^{\Sigma_\infty}-\Lambda_{\emptyset}^{\Sigma_\infty})f,f} = \norm{P^\perp u}_*^2.
			\end{equation*}
			\item We have $u = Qu_0$ and
			\begin{equation*}
				\inner{(\Lambda_{\Sigma_0}^{\emptyset}-\Lambda_{\Sigma_0}^{\Sigma_\infty})f,f} = \norm{Q^\perp u_0}_*^2.
			\end{equation*}
		\end{enumerate}
	\end{lemma}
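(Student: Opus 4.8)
The plan is to recognise that the three spaces $\Hs_\emptyset^{\Sigma_\infty} \subseteq \Hs_{\Sigma_0}^{\Sigma_\infty} \subseteq \Hs_{\Sigma_0}^{\emptyset}$ carry one and the same inner product $\inner{\,\cdot\,,\,\cdot\,}_*$, so that $u_\infty$, $u$, $u_0$ are Galerkin approximations of one another on nested closed subspaces and the projections $P,Q$ are exactly the corresponding Galerkin projections. The only tool needed is the variational characterisation of an orthogonal projection: $Pu$ is the unique element of $\Hs_\emptyset^{\Sigma_\infty}$ with $\inner{Pu,w}_* = \inner{u,w}_*$ for all $w\in\Hs_\emptyset^{\Sigma_\infty}$, and likewise for $Q$.

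For part (i) I would first observe that, since $\Hs_\emptyset^{\Sigma_\infty}$ is a subspace of $\Hs_{\Sigma_0}^{\Sigma_\infty}$, every $w\in\Hs_\emptyset^{\Sigma_\infty}$ is an admissible test function in the weak problem~\eqref{eq:weakprob} for $u = u_{\Sigma_0,f}^{\Sigma_\infty}$; hence $\inner{u,w}_* = \inner{f,w|_\Gamma}$ for all such $w$. Comparing with the projection identity gives $\inner{Pu,w}_* = \inner{f,w|_\Gamma}$ for all $w\in\Hs_\emptyset^{\Sigma_\infty}$, and since $Pu\in\Hs_\emptyset^{\Sigma_\infty}$, uniqueness of the minimiser of $J$ restricted to $\Hs_\emptyset^{\Sigma_\infty}$ forces $Pu = u_\infty$. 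Then $u = Pu + P^\perp u$ is an orthogonal splitting, so $\norm{u}_*^2 = \norm{u_\infty}_*^2 + \norm{P^\perp u}_*^2$, and the energy identities from Section~\ref{sec:forward}, namely $\inner{\Lambda_{\Sigma_0}^{\Sigma_\infty}f,f} = \norm{u}_*^2$ and $\inner{\Lambda_\emptyset^{\Sigma_\infty}f,f} = \norm{u_\infty}_*^2$, yield the asserted formula by subtraction.

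Part (ii) is proved the same way, now with $Q$, $u_0 = u_{\Sigma_0,f}^\emptyset$, and the inclusion $\Hs_{\Sigma_0}^{\Sigma_\infty}\subseteq\Hs_{\Sigma_0}^\emptyset$: every $w\in\Hs_{\Sigma_0}^{\Sigma_\infty}$ is admissible in the weak problem for $u_0$, so $Qu_0\in\Hs_{\Sigma_0}^{\Sigma_\infty}$ solves the variational problem defining $u$, whence $Qu_0 = u$. Pythagoras then gives $\norm{u_0}_*^2 = \norm{u}_*^2 + \norm{Q^\perp u_0}_*^2$, and combining $\inner{\Lambda_{\Sigma_0}^\emptyset f,f} = \norm{u_0}_*^2$ with $\inner{\Lambda_{\Sigma_0}^{\Sigma_\infty}f,f} = \norm{u}_*^2$ finishes the proof.

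There is essentially no analytic obstacle here; the only point requiring care is the bookkeeping of the nested inclusions. Formally, elements of $\Hs_\emptyset^{\Sigma_\infty}$ and $\Hs_{\Sigma_0}^{\Sigma_\infty}$ are functions on $\Omega$ and on $\Omega\setminus\overline{\Sigma_0}$ respectively, but since $\Sigma_0$ has zero $d$-dimensional Lebesgue measure their gradients are unambiguously defined a.e.\ on $\Omega$, so $\inner{\,\cdot\,,\,\cdot\,}_*$ is literally the same bilinear form on all three spaces and the inclusions are isometric embeddings. This is precisely what legitimises treating $P$ and $Q$ as orthogonal projections and applying the Pythagorean identity across the different potential problems.
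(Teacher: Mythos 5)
Your proof is correct and follows essentially the same approach as the paper: the identifications $u_\infty = Pu$ and $u = Qu_0$ are obtained in exactly the same way, by noting that the smaller space is admissible as test functions in the larger space's weak formulation. The only cosmetic difference is the finishing step, where the paper applies the weak formulation once more with $P^\perp u$ (resp.\ $Q^\perp u_0$) as a test function, while you invoke the Pythagorean decomposition together with the energy identities $\inner{\Lambda f,f} = \norm{\cdot}_*^2$; these two are trivially equivalent.
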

	\begin{proof}
		Proof of (i): From the weak formulation for $u$ and for all $v\in\Hs_{\emptyset}^{\Sigma_\infty}\subseteq \Hs_{\Sigma_0}^{\Sigma_\infty}$, we have
		\begin{equation*}
			\inner{f,v|_{\Gamma}} = \inner{u,v}_* = \inner{Pu,v}_*.
		\end{equation*}
		However, by the weak formulation for $u_\infty$ and its unique solvability in $\Hs_{\emptyset}^{\Sigma_\infty}$, we have $u_\infty = Pu$. Thus $u-u_\infty = P^\perp u$. Using the weak formulation for $u$ again, we have
		\begin{equation*}
			\inner{(\Lambda_{\Sigma_0}^{\Sigma_\infty}-\Lambda_{\emptyset}^{\Sigma_\infty})f,f} = \inner{f,(P^\perp u)|_\Gamma} = \inner{u,P^\perp u}_* = \norm{P^\perp u}_*^2.
		\end{equation*}
		
		Proof of (ii): From the weak formulation for $u_0$ and for all $v\in \Hs_{\Sigma_0}^{\Sigma_\infty} \subseteq \Hs_{\Sigma_0}^{\emptyset}$, we have
		\begin{equation*}
			\inner{f,v|_{\Gamma}} = \inner{u_0,v}_* = \inner{Qu_0,v}_*.
		\end{equation*}
		However, by the weak formulation for $u$ and its unique solvability in $\Hs_{\Sigma_0}^{\Sigma_\infty}$, we have $u = Qu_0$. Thus $u_0-u = Q^\perp u_0$. Using the weak formulation for $u_0$ again, we have
		\begin{equation*}
			\inner{(\Lambda_{\Sigma_0}^{\emptyset}-\Lambda_{\Sigma_0}^{\Sigma_\infty})f,f} = \inner{f,(Q^\perp u_0)|_\Gamma} = \inner{u_0,Q^\perp u_0}_* = \norm{Q^\perp u_0}_*^2. \qedhere
		\end{equation*}
	\end{proof}
	
	\begin{remark}
		While we do not use this fact, we note that, for sufficiently regular $\gamma_0$ one can write the differences of the ND maps in Lemma~\ref{lemma:NDdiff} as integrals on $\Sigma_0$ and $\Sigma_\infty$ via integration by parts:
		\begin{align*}
			\inner{(\Lambda_{\Sigma_0}^{\Sigma_\infty}-\Lambda_{\emptyset}^{\Sigma_\infty})f,f} &= -\int_{\Sigma_0}\Bigl(\gamma_0\frac{\partial u_\infty}{\partial n}\Bigr){[u]}\,\di S, \\
			\inner{(\Lambda_{\Sigma_0}^{\emptyset}-\Lambda_{\Sigma_0}^{\Sigma_\infty})f,f} &= -\int_{\Sigma_\infty}\Bigl[\gamma_0\frac{\partial u}{\partial n}\Bigr]{u_0}\,\di S.
		\end{align*}
		The integrals should be understood in the appropriate weak sense.
	\end{remark}

	Next we state two general lemmas in functional analysis that are  proven in \cite{Gebauer2008b}. The first is a lemma relating the ranges of operators to bounds on their adjoints.
	
	\begin{lemma}[Lemma~2.5 in \cite{Gebauer2008b}] \label{lemma:rangenorm}
		Let $H$, $K_1$, and $K_2$ be Hilbert spaces and let $A_j\in\mathscr{L}(K_j,H)$ for $j = 1,2$. Then
		\begin{equation*}
			R(A_1) \subseteq R(A_2) \qquad \text{if and only if} \qquad \exists C>0, \forall x\in H: \norm{A_1^* x}_{K_1} \leq C \norm{A_2^* x}_{K_2}.
		\end{equation*}
	\end{lemma}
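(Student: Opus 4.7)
The plan is to recognise the statement as a form of Douglas' range-inclusion / majorisation theorem and to prove both implications by exhibiting a bounded factorising operator between $K_1$ and $K_2$; once such a factorisation is available in either direction, the corresponding conclusion will fall out from routine adjoint bookkeeping.

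For the direction ``$\Leftarrow$'', I would start from the norm inequality and define a candidate operator $S$ on the range $R(A_2^*)\subseteq K_2$, taking values in $K_1$, by the prescription $S(A_2^* x):=A_1^* x$. The assumed estimate $\norm{A_1^* x}_{K_1}\le C\norm{A_2^* x}_{K_2}$ will do double duty: it forces $S$ to be well defined (if $A_2^* x=A_2^* x'$ then $A_1^* x=A_1^* x'$) and simultaneously bounded with $\norm{S}\le C$. I would then extend $S$ by continuity to $\overline{R(A_2^*)}$ and by zero on its orthogonal complement $\ker A_2\subseteq K_2$, obtaining $S\in\mathscr{L}(K_2,K_1)$ satisfying $SA_2^*=A_1^*$ on all of $H$. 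Taking adjoints yields $A_1=A_2 S^*$, which gives $R(A_1)\subseteq R(A_2)$ at once.

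For the direction ``$\Rightarrow$'', I would go the opposite way and build $T\in\mathscr{L}(K_1,K_2)$ with $A_1=A_2 T$; then $A_1^*=T^* A_2^*$ immediately gives $\norm{A_1^* x}_{K_1}\le \norm{T}\norm{A_2^* x}_{K_2}$, which is the desired estimate with $C=\norm{T}$. To construct $T$, I would use that for each $k_1\in K_1$ the hypothesis $R(A_1)\subseteq R(A_2)$ furnishes at least one preimage of $A_1 k_1$ under $A_2$; among all such preimages there is a unique one in $(\ker A_2)^\perp$ via the canonical identification $K_2/\ker A_2 \cong (\ker A_2)^\perp$, and I would define $Tk_1$ to be that element. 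Linearity is automatic from uniqueness. Boundedness of $T$ would follow from the closed graph theorem: if $k_1^{(n)}\to k_1$ and $Tk_1^{(n)}\to \ell$, then $A_2\ell=\lim A_2 Tk_1^{(n)}=\lim A_1 k_1^{(n)}=A_1 k_1$, and since $(\ker A_2)^\perp$ is closed we have $\ell\in(\ker A_2)^\perp$, so $\ell=T k_1$ by uniqueness.

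The step I expect to be the most delicate is the closed graph argument in ``$\Rightarrow$'', because the whole construction of $T$ rests on the fact that $(\ker A_2)^\perp$ is closed, so that the unique-preimage selection is preserved under norm limits. In ``$\Leftarrow$'' the only subtle point is the extension of $S$ from the possibly non-closed subspace $R(A_2^*)$ to $\overline{R(A_2^*)}$ by continuity, together with the declaration that $S$ vanishes on $\ker A_2=R(A_2^*)^{\perp}$; once these extensions are in place, the identity $SA_2^*=A_1^*$ and its adjoint form are mechanical.
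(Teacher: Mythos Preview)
Your proof is correct; this is the standard Douglas range--inclusion (factorisation) argument, and both directions are handled cleanly. Note, however, that the paper does not actually prove this lemma: it is merely quoted as Lemma~2.5 from \cite{Gebauer2008b}, so there is no in-paper proof to compare against. Your write-up would serve perfectly well as a self-contained justification if one were desired.
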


	The second lemma is the constructive version for the localised potentials from \cite{Gebauer2008b}.
	
	\begin{lemma}[Lemma 2.8 in \cite{Gebauer2008b}] \label{lemma:locgen}
		Let $H$, $K_1$, and $K_2$ be Hilbert spaces, let $A_j\in\mathscr{L}(K_j,H)$ for $j = 1,2$, and assume that $A_2^*$ is injective. Assume that there exists $y_0\in R(A_1)$ such that $y_0\not\in R(A_2)$. For $n\in\N$ we define
		\begin{equation*}
			\xi_n = \bigl(A_2A_2^* + \tfrac{1}{n}I\bigr)^{-1}y_0
		\end{equation*}
		and
		\begin{equation*}
			x_n = \frac{\xi_n}{\norm{A_2^* \xi_n}^{3/2}_{K_2}}.
		\end{equation*}
		Then
		\begin{equation*}
			\lim_{n\to\infty}\norm{A_1^* x_n}_{K_1} = \infty \qquad \text{and} \qquad \lim_{n\to\infty}\norm{A_2^* x_n}_{K_2} = 0.
		\end{equation*}
	\end{lemma}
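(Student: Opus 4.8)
The plan is to push everything through the spectral calculus of the bounded, positive, self-adjoint operator $T=A_2A_2^*\in\mathscr{L}(H)$. Since $\ker T=\ker(A_2A_2^*)=\ker A_2^*=\{0\}$ by the injectivity hypothesis, $0$ is not an eigenvalue of $T$; writing $\{E_\lambda\}$ for the spectral resolution of $T$ and $\mu=\inner{E_\lambda y_0,y_0}$ for the associated finite positive measure, which is supported in $(0,\norm{T}]$, and recalling $\xi_n=(T+\tfrac1n I)^{-1}y_0$, one gets the representations
\begin{equation*}
\norm{A_2^*\xi_n}^2=\inner{T\xi_n,\xi_n}=\int\frac{\lambda}{(\lambda+1/n)^2}\,\di\mu(\lambda),\qquad \inner{\xi_n,y_0}=\int\frac{1}{\lambda+1/n}\,\di\mu(\lambda),
\end{equation*}
together with $\tfrac1n\norm{\xi_n}^2=\int\tfrac{1/n}{(\lambda+1/n)^2}\,\di\mu(\lambda)$. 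The elementary identity $\tfrac{1}{\lambda+1/n}=\tfrac{\lambda}{(\lambda+1/n)^2}+\tfrac{1/n}{(\lambda+1/n)^2}$ then gives the key bridge
\begin{equation*}
\inner{\xi_n,y_0}=\norm{A_2^*\xi_n}^2+\tfrac1n\norm{\xi_n}^2\geq\norm{A_2^*\xi_n}^2,
\end{equation*}
which I would isolate at the outset since it links the two desired conclusions.

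Next I would establish $\norm{A_2^*\xi_n}\to\infty$. The analytic input is the classical fact $R(A_2)=R(T^{1/2})$ (for any bounded $S$ one has $R(S)=R((SS^*)^{1/2})$) combined with the spectral characterisation $y_0\in R(T^{1/2})\iff\int_{(0,\infty)}\lambda^{-1}\,\di\mu(\lambda)<\infty$; here injectivity of $A_2^*$ is exactly what ensures $\mu$ carries no mass at $0$, so that no extra orthogonality condition on $y_0$ is needed. Since $y_0\notin R(A_2)$ by hypothesis, we conclude $\int\lambda^{-1}\,\di\mu=\infty$. For each fixed $\lambda>0$ the integrand $\lambda(\lambda+1/n)^{-2}$ increases monotonically to $\lambda^{-1}$ as $n\to\infty$ (and stays below it), so monotone convergence yields $\norm{A_2^*\xi_n}^2\to\int\lambda^{-1}\,\di\mu=\infty$. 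In particular $\norm{A_2^*\xi_n}>0$ for every $n$ (since $\mu\neq0$, as $y_0\neq0$), so $x_n$ is well defined, and
\begin{equation*}
\norm{A_2^*x_n}=\frac{\norm{A_2^*\xi_n}}{\norm{A_2^*\xi_n}^{3/2}}=\norm{A_2^*\xi_n}^{-1/2}\longrightarrow0.
\end{equation*}

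For the blow-up of $\norm{A_1^*x_n}$ I would use $y_0\in R(A_1)$: choose $z_0\in K_1$ with $A_1z_0=y_0$, and note $z_0\neq0$ because $y_0\neq0$ (indeed $0\in R(A_2)$ while $y_0\notin R(A_2)$). Adjointness and Cauchy--Schwarz give $\norm{A_2^*\xi_n}^2\leq\inner{\xi_n,y_0}=\inner{A_1^*\xi_n,z_0}\leq\norm{A_1^*\xi_n}\,\norm{z_0}$, hence $\norm{A_1^*\xi_n}\geq\norm{A_2^*\xi_n}^2/\norm{z_0}$, and therefore
\begin{equation*}
\norm{A_1^*x_n}=\frac{\norm{A_1^*\xi_n}}{\norm{A_2^*\xi_n}^{3/2}}\geq\frac{\norm{A_2^*\xi_n}^{1/2}}{\norm{z_0}}\longrightarrow\infty.
\end{equation*}

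The main obstacle is the input invoked in the second paragraph: the identity $R(A_2)=R((A_2A_2^*)^{1/2})$ and the spectral description of membership in $R(T^{1/2})$, which is what converts the qualitative hypothesis $y_0\notin R(A_2)$ into the quantitative divergence $\int\lambda^{-1}\,\di\mu=\infty$. Everything afterwards is bookkeeping: the exponent $3/2$ in $x_n=\xi_n/\norm{A_2^*\xi_n}^{3/2}$ is calibrated precisely so that the single divergent scalar $\norm{A_2^*\xi_n}$ simultaneously forces $\norm{A_2^*x_n}\sim\norm{A_2^*\xi_n}^{-1/2}\to0$ and $\norm{A_1^*x_n}\gtrsim\norm{A_2^*\xi_n}^{1/2}\to\infty$. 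If one wishes to bypass the spectral theorem, the divergence $\norm{A_2^*\xi_n}^2\to\infty$ can instead be extracted from operator monotonicity of $s\mapsto(T+s)^{-1}$ together with a weak-compactness argument showing that boundedness of $\inner{(T+1/n)^{-1}y_0,y_0}$ would force $y_0\in R(T^{1/2})=R(A_2)$.
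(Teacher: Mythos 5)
Your proof is correct. The paper itself does not prove this lemma---it is cited verbatim from Gebauer (2008)---and your argument matches the original's essential structure: the bridging identity $\inner{\xi_n,y_0}=\norm{A_2^*\xi_n}^2+\tfrac1n\norm{\xi_n}^2$, the spectral/range characterisation $R(A_2)=R((A_2A_2^*)^{1/2})$ combined with monotone convergence to force $\norm{A_2^*\xi_n}\to\infty$, and the Cauchy--Schwarz step $\norm{A_2^*\xi_n}^2\leq\inner{\xi_n,y_0}=\inner{A_1^*\xi_n,z_0}\leq\norm{z_0}\,\norm{A_1^*\xi_n}$, after which the exponent $3/2$ delivers both limits.
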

	
	\section{Localised potentials with cracks} \label{sec:locpot2}
	
	Let $V\in \A$ and let $\Sigma$ be a $(\Sigma_0,\Sigma_\infty)$ collection of cracks. For $F\in L^2(V)^d$ we define $w = w_{\Sigma_0,F}^{\Sigma_\infty} \in \Hs_{\Sigma_0}^{\Sigma_\infty}$ as the unique solution of the following variational problem:
	\begin{equation} \label{eq:weq}
		\int_{\Omega} \gamma_0\nabla w\cdot\nabla v \,\di x = \int_{V} F\cdot\nabla v\,\di x, \qquad \forall v\in \Hs_{\Sigma_0}^{\Sigma_\infty}.
	\end{equation}
	The dependence on $V$ is not explicitly given in the notation of $w_{\Sigma_0,F}^{\Sigma_\infty}$, but is indirectly given via $F$. We now define an operator $L_{\Sigma_0}^{\Sigma_\infty}(V) : L^2(V)^d \to L_\diamond^2(\Gamma)$ as 
	\begin{equation*}
		L_{\Sigma_0}^{\Sigma_\infty}(V)F = w^{\Sigma_\infty}_{\Sigma_0,F}|_{\Gamma}.
	\end{equation*}
	
	Let $u = u_{\Sigma_0,f}^{\Sigma_\infty}$, then from the variational problems of $u$ and $w$ we have
	\begin{align*}
		\inner{f,L_{\Sigma_0}^{\Sigma_\infty}(V)F} &= \inner{f,w|_\Gamma} = \inner{u,w}_* = \inner{\nabla u|_{V},F}_{L^2(V)^d}.
	\end{align*}
	Hence we have
	\begin{equation} \label{eq:Ladj}
		(L_{\Sigma_0}^{\Sigma_\infty}(V))^*f = \nabla u_{\Sigma_0,f}^{\Sigma_\infty}|_{V}, \qquad f\in L_\diamond^2(\Gamma).
	\end{equation}
	Next we will prove results for the ranges of these variational operators, that will be essential for the localised potentials in Proposition~\ref{prop:locpot}.
	\begin{proposition} \label{prop:rangeindep}
		Let $V\in \A$ and let $\Sigma$ be a $(\Sigma_0,\Sigma_\infty)$ collection of cracks. 
		\begin{enumerate}[\rm(i)]
			\item If $\Sigma_0\Subset V$ then $R(L_{\emptyset}^{\Sigma_\infty}(V)) = R(L_{\Sigma_0}^{\Sigma_\infty}(V))$.
			\item If $\Sigma_\infty\Subset V$ then $R(L_{\Sigma_0}^{\emptyset}(V)) = R(L_{\Sigma_0}^{\Sigma_\infty}(V))$.
		\end{enumerate}
	\end{proposition}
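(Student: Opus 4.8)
The plan is to prove the two statements by establishing the corresponding inclusions of ranges via Lemma~\ref{lemma:rangenorm}, reducing everything to norm comparisons of the adjoints $(L_{\ast}^{\ast}(V))^*f = \nabla u^{\ast}_{\ast,f}|_V$ from \eqref{eq:Ladj}. For part (i): one inclusion is immediate because $\Hs_\emptyset^{\Sigma_\infty}\subseteq\Hs_{\Sigma_0}^{\Sigma_\infty}$ allows us to use the same test functions, so it should follow (as in section~\ref{sec:proof_main1}) that $\norm{(L_\emptyset^{\Sigma_\infty}(V))^*f}_{L^2(V)^d}\le\norm{(L_{\Sigma_0}^{\Sigma_\infty}(V))^*f}_{L^2(V)^d}$ or vice versa, giving $R(L_\emptyset^{\Sigma_\infty}(V))\subseteq R(L_{\Sigma_0}^{\Sigma_\infty}(V))$ (or the reverse). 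So the substance is the opposite inclusion. Here the hypothesis $\Sigma_0\Subset V$ is crucial: I would show that $\nabla u^{\Sigma_\infty}_{\Sigma_0,f}|_V$ controls $\nabla u^{\Sigma_\infty}_{\emptyset,f}|_V$, by using the decomposition $u^{\Sigma_\infty}_{\Sigma_0,f} - u^{\Sigma_\infty}_{\emptyset,f}$ in $\Omega\setminus\overline{\Sigma_0}$ and an interior elliptic/Caccioppoli estimate on a collar neighbourhood of $\Sigma_0$ contained in $V$. That is, I want a bound of the form $\norm{\nabla(u_{\emptyset,f}^{\Sigma_\infty} - u_{\Sigma_0,f}^{\Sigma_\infty})}_{L^2(W)}\le C\norm{\nabla u_{\Sigma_0,f}^{\Sigma_\infty}}_{L^2(V)}$ for a slightly larger set $W$, using that both functions solve the same PDE away from $\Sigma_0$ and that the difference is controlled near $\Sigma_0$.

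More concretely, for (i) I would argue as follows. By Lemma~\ref{lemma:rangenorm} it suffices to find $C>0$ with $\norm{\nabla u^{\Sigma_\infty}_{\emptyset,f}}_{L^2(V)^d}\le C\,\norm{\nabla u^{\Sigma_\infty}_{\Sigma_0,f}}_{L^2(V)^d}$ for all $f\in L^2_\diamond(\Gamma)$ (together with the trivial reverse bound). Write $u = u^{\Sigma_\infty}_{\Sigma_0,f}$ and $\tilde u = u^{\Sigma_\infty}_{\emptyset,f}$; by Lemma~\ref{lemma:NDdiff}(i) applied with the roles suitably arranged, $\tilde u = Pu$ is the $\inner{\,\cdot\,,\,\cdot\,}_*$-orthogonal projection of $u$ onto $\Hs_\emptyset^{\Sigma_\infty}$, hence $\norm{\tilde u}_* \le \norm{u}_*$, but this is a global bound, not the localised one I need. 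Instead, I would fix open sets with $\Sigma_0\Subset W_1\Subset W_2\Subset V$, take a cutoff $\varphi\in C_c^\infty(W_2)$ with $\varphi\equiv 1$ on $W_1$, and observe that $u$ and $\tilde u$ both satisfy $\nabla\cdot(\gamma_0\nabla\cdot)=0$ in $W_2\setminus\overline{\Sigma_0}$ with the same exterior data, so the difference $r = u-\tilde u$, extended suitably, satisfies a homogeneous problem in $\Omega\setminus\overline{W_1}$; testing the weak equation for $r$ against $\varphi^2 r$ and using that $r$ vanishes (in the appropriate weak sense) outside $W_1$ together with $\gamma_0\in L^\infty$, $\essinf\gamma_0>0$, yields $\norm{\nabla r}_{L^2(\Omega\setminus\overline{W_1})}\le C\norm{\nabla r}_{L^2(W_2\setminus W_1)}\le C\norm{\nabla u}_{L^2(W_2)} + C\norm{\nabla\tilde u}_{L^2(W_2)}$; combined with the global bound $\norm{\nabla\tilde u}_{L^2(\Omega)}\le\norm{\nabla u}_{L^2(\Omega)}$ this still is not obviously localised to $V$. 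The cleaner route, which I would ultimately pursue, is to compare the two variational problems directly on $\Omega\setminus\overline V$: since $\Sigma_0\Subset V$, both $u$ and $\tilde u$ restrict to $H^1(\Omega\setminus\overline V)$-functions solving the same equation there with the same Neumann data on $\Gamma$, so their difference is determined by its Cauchy data on $\partial V$; a standard interior estimate then bounds the $H^1(\Omega\setminus V)$-norm of $u - \tilde u$ (hence in particular its trace on $\Gamma$ is not needed) and one closes the loop.

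For part (ii) the argument is dual: the chain is $\Hs_\emptyset^{\Sigma_\infty}\subseteq\Hs_{\Sigma_0}^{\Sigma_\infty}\subseteq\Hs_{\Sigma_0}^{\emptyset}$, so one inclusion of ranges is again immediate by using fewer/more test functions, and for the other I would use Lemma~\ref{lemma:NDdiff}(ii}, writing $u^{\Sigma_\infty}_{\Sigma_0,f} = Qu^{\emptyset}_{\Sigma_0,f}$ as the projection onto the locally-constant-on-$\Sigma_\infty$ subspace, together with the hypothesis $\Sigma_\infty\Subset V$, to transfer the blow-up/control from $\nabla u^{\emptyset}_{\Sigma_0,f}|_V$ to $\nabla u^{\Sigma_\infty}_{\Sigma_0,f}|_V$ via the same collar-neighbourhood elliptic estimate around $\Sigma_\infty$. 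In both cases the key mechanism is that forcing a perfectly insulating (resp.\ conducting) condition on a crack changes the solution only in a controlled way once we measure the gradient on an open set $V$ that engulfs the crack, because the two solutions agree far from the crack and elliptic regularity propagates the difference inward.

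The main obstacle I anticipate is making the localised comparison estimate rigorous in the presence of the Lipschitz cracks and the jump/locally-constant conditions: the functions live in $\Hs_{\Sigma_0}^{\Sigma_\infty}$ rather than $H^1(\Omega)$, so ``restricting to $\Omega\setminus\overline V$'' and invoking interior estimates requires care that $\Sigma_0$ (and $\Sigma_\infty$) sit strictly inside $V$ — which is exactly the hypothesis — and that the relevant trace and jump terms on $\partial V$ behave well. I expect this to be handled by choosing the intermediate open sets $W_1,W_2$ with smooth enough boundaries disjoint from all the cracks, so that on $\Omega\setminus\overline{W_1}$ the solutions are genuine $H^1$ solutions of a scalar divergence-form equation with $L^\infty$ coefficients and no cracks, to which the classical Caccioppoli inequality applies directly; the cutoff/energy argument then gives the desired norm comparison, and Lemma~\ref{lemma:rangenorm} converts it into the range equality.
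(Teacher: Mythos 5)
Your plan is to deduce the range equality from a two‑sided norm comparison of the adjoints via Lemma~\ref{lemma:rangenorm}, i.e.\ to prove, uniformly in $f$, that $\norm{\nabla u^{\Sigma_\infty}_{\emptyset,f}}_{L^2(V)^d}$ and $\norm{\nabla u^{\Sigma_\infty}_{\Sigma_0,f}}_{L^2(V)^d}$ are equivalent, by local elliptic estimates near $\Sigma_0$. This is a genuinely different strategy from the paper's, which never goes through Lemma~\ref{lemma:rangenorm} for this proposition; instead, given $F_1\in L^2(V)^d$ with $L_\emptyset^{\Sigma_\infty}(V)F_1 = f$, it \emph{constructs} an explicit $F\in L^2(V)^d$ with $L_{\Sigma_0}^{\Sigma_\infty}(V)F = f$ (and conversely), using a cutoff $\varphi$ supported in $V$ equal to $1$ near $\Sigma_0$ together with, for the harder inclusion, an auxiliary pure‑Neumann problem on $V$ to absorb the resulting zeroth‑order term $g\,v_0$ into a gradient source $\nabla\widehat w$. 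This constructive route avoids having to quantify anything.

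Your route, as sketched, has concrete gaps that I do not see how to close. First, neither range inclusion is ``immediate'' from the nesting $\Hs_\emptyset^{\Sigma_\infty}\subseteq\Hs_{\Sigma_0}^{\Sigma_\infty}$: that nesting gives the global projection bound $\norm{\tilde u}_*\le\norm{u}_*$, which (as you yourself note) is not a bound of $\norm{\nabla\tilde u}_{L^2(V)}$ by $\norm{\nabla u}_{L^2(V)}$, and it points in only one direction anyway. Second, the claim that ``$r$ vanishes (in the appropriate weak sense) outside $W_1$'' is false: $r = u^{\Sigma_\infty}_{\Sigma_0,f}-u^{\Sigma_\infty}_{\emptyset,f}$ is a nontrivial solution of the homogeneous (zero Neumann on $\partial\Omega$) crack problem driven by a jump across $\Sigma_0$, and it is generically nonzero throughout $\Omega$; testing against $\varphi^2 r$ therefore does not give a Caccioppoli inequality of the kind you invoke, and the chain $\norm{\nabla r}_{L^2(\Omega\setminus\overline{W_1})}\le C\norm{\nabla r}_{L^2(W_2\setminus W_1)}$ has no basis. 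Third, the ``cleaner route'' of comparing the two problems on $\Omega\setminus\overline V$ and invoking an interior estimate bounds $r$ in $\Omega\setminus V$ by its Cauchy data on $\partial V$, but what you actually need is a bound on $\norm{\nabla r}_{L^2(V)}$ by $\norm{\nabla u}_{L^2(V)}$ (or vice versa), which that step does not produce; the argument is circular as written and you do not actually close the loop. Finally, the two‑sided norm comparison you are aiming for is in fact a \emph{consequence} of the range equality (via Lemma~\ref{lemma:rangenorm}), but to prove it directly would amount to a quantitative statement that is at least as hard as the qualitative range equality itself; the paper's explicit construction of a source term sidesteps this entirely. I would recommend abandoning the adjoint‑norm approach and instead building the modified source directly: for the inclusion $R(L_{\Sigma_0}^{\Sigma_\infty}(V))\subseteq R(L_\emptyset^{\Sigma_\infty}(V))$ take $F = F_1-\gamma_0\nabla(\varphi w)|_V$ with $\widetilde w = (1-\varphi)w$, and for the reverse inclusion use the cutoff plus the auxiliary problem on $V$ to convert the term $\int_V g\,v_0\,\di x$ into $\int_V \nabla\widehat w\cdot\nabla v_0\,\di x$.
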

	\begin{proof}
		We give a proof of (i), and note that the proof of (ii) is almost the same with obvious modifications.
		
		Proof of ``$\subseteq$'': Let $f\in R(L_{\emptyset}^{\Sigma_\infty}(V))$, then there exists $F_1\in L^2(V)^d$ such that $f = w|_\Gamma$ with $w = w_{\emptyset,F_1}^{\Sigma_\infty}$. We have
		\begin{equation} \label{eq:weq1}
			\int_{\Omega} \gamma_0\nabla w\cdot\nabla v' \,\di x = \int_{V} F_1\cdot\nabla v'\,\di x, \qquad \forall v'\in \Hs_{\emptyset}^{\Sigma_\infty}.
		\end{equation}
		
		Let $W_1, W_2 \in \A$ such that $\Sigma_0 \Subset W_1 \Subset W_2 \subseteq V$ and with $\dist(W_2,\Sigma_\infty)>0$. Using the $C^\infty$-Urysohn lemma \cite[Lemma~8.18]{folland}, we may construct $\varphi \in C_{\textup{c}}^\infty(W_2)$ with $0\leq\varphi\leq 1$ and $\varphi \equiv 1$ in $W_1$. In the following we will let $v\in\Hs_{\Sigma_0}^{\Sigma_\infty}$ be arbitrary but fixed, and let $c$ be the constant such that
		\begin{equation*}
			v_0 = v-c
		\end{equation*}
		has vanishing mean on $\partial V$. Moreover, we also define
		\begin{equation*}
			v' = (1-\varphi)v_0.
		\end{equation*}
		We note that $v'$ (extended by zero to ${\overline{\Sigma_0}}$)	lies in $\Hs^{\Sigma_\infty}_{\emptyset}$. Therefore $\eqref{eq:weq1}$ is applicable for $v'$, and using the fact that $v'|_{\Omega\setminus V} = v_0|_{\Omega\setminus V}$ we now get
		\begin{align}
			\int_{\Omega} \gamma_0\nabla w\cdot\nabla v\,\di x &= \int_{V} \gamma_0\nabla {w}\cdot\nabla v_0\,\di x + \int_{\Omega\setminus V} \gamma_0\nabla w\cdot\nabla v'\,\di x \notag \\
			&= \int_{V} \gamma_0\nabla w\cdot\nabla v_0\,\di x + \int_{\Omega} \gamma_0\nabla w\cdot\nabla v'\,\di x - \int_{V} \gamma_0\nabla w\cdot\nabla v'\,\di x \notag \\
			&= \int_{V} \gamma_0\nabla w\cdot\nabla v_0\,\di x + \int_V (F_1 - \gamma_0\nabla w)\cdot\nabla v'\,\di x. \label{eq:tmpintw1}
		\end{align}
		Define $F_2 = F_1 - \gamma_0\nabla w|_V$. Using the definitions of ${v'}$ and $\varphi$, and that $\Sigma_0\Subset W_1$ where $1-\varphi$ vanishes, we have
		\begin{equation} \label{eq:tmpintw2}
			\int_V F_2\cdot{\nabla {v'}}\,\di x = \int_{V} (1-\varphi)F_2\cdot\nabla v_0\,\di x + \int_{V} (F_2\cdot\nabla(1-\varphi))v_0\,\di x. 
		\end{equation}
		Define $F_3 = \gamma_0\nabla {w}|_V + (1-\varphi)F_2$ and $g = F_2\cdot\nabla(1-\varphi)$, then \eqref{eq:tmpintw1} and \eqref{eq:tmpintw2} collectively give
		\begin{equation} \label{eq:tmpintw3}
			\int_{\Omega} \gamma_0\nabla {w}\cdot{\nabla v}\,\di x = \int_V F_3\cdot{\nabla v_0}\,\di x + \int_{V} gv_0\,\di x.
		\end{equation}
		We now consider an auxiliary problem. Let $V^\circ$ be the interior of $V$ and let
		\begin{equation*}
			\Hs_V = \Bigl\{\widehat{v}\in H^1(V^\circ\setminus\overline{\Sigma_0}) \mid \int_{\partial V} \widehat{v}|_{\partial V}\,\di S = 0 \Bigr\},
		\end{equation*}
		then $\Hs_V$ is a Hilbert space with inner product (whose norm is equivalent to the usual $H^1$-norm)
		\begin{equation*}
			\inner{\widehat{v}_1,\widehat{v}_2}_{\Hs_V} = \int_V \nabla \widehat{v}_1\cdot\nabla\widehat{v}_2\,\di x.
		\end{equation*}
		Using the Lax--Milgram lemma, we now define  $\widehat{w}$ to be the unique solution in $\Hs_V$ of the variational problem
		\begin{equation} \label{eq:tmpintw4}
			\int_{V} \nabla \widehat{w}\cdot {\nabla\widehat{v}}\,\di x = \int_{V} g \widehat{v}\,\di x, \qquad \forall \widehat{v}\in\Hs_V.
		\end{equation}
		Since $v_0$ has vanishing mean on $\partial V$ the restriction of $v_0$ to $V$ belongs to $\Hs_V$. Let $F = F_3 + \nabla\widehat{w}$, then \eqref{eq:tmpintw3} and \eqref{eq:tmpintw4} lead to
		\begin{equation*}
			\int_{\Omega} \gamma_0\nabla w\cdot\nabla v\,\di x = \int_V F\cdot\nabla v_0\,\di x = \int_V F\cdot\nabla v\,\di x.
		\end{equation*}
		As $v\in\Hs_{\Sigma_0}^{\Sigma_\infty}$ was arbitrary (and $w \in \Hs_{\emptyset}^{\Sigma_\infty} \subseteq \Hs_{\Sigma_0}^{\Sigma_\infty}$), we have proven 
		\begin{equation*}
			f = w|_\Gamma  = L_{\Sigma_0}^{\Sigma_\infty}(V)F,
		\end{equation*}
		and therefore $R(L_{\emptyset}^{\Sigma_\infty}(V)) \subseteq R(L_{\Sigma_0}^{\Sigma_\infty}(V))$.
		
		\bigskip
		
		Proof of ``$\supseteq$'': Suppose now that $f\in R(L_{\Sigma_0}^{\Sigma_\infty}(V))$, hence there exists $F_1\in L^2(V)^d$ such that $f = w|_\Gamma$ with $w = w_{\Sigma_0,F_1}^{\Sigma_\infty}$. Thus we have
		\begin{equation} \label{eq:weq2}
			\int_{\Omega} \gamma_0\nabla w\cdot\nabla v \,\di x = \int_{V} F_1\cdot\nabla v\,\di x, \qquad \forall v\in \Hs_{\Sigma_0}^{\Sigma_\infty}.
		\end{equation} 
		
		Let $\varphi$ be defined as before, and define $\widetilde{w}\in \Hs_{\emptyset}^{\Sigma_\infty}$ via the formula $\widetilde{w}=(1-\varphi) w$ and extended by zero on $\overline{\Sigma_0}$. Recall that $\Hs_{\emptyset}^{\Sigma_\infty}\subseteq\Hs_{\Sigma_0}^{\Sigma_\infty}$ and let $v\in \Hs_{\emptyset}^{\Sigma_\infty}$ be arbitrary, then \eqref{eq:weq2} implies 
		\begin{align*}
			\int_{\Omega} \gamma_0\nabla \widetilde{w}\cdot\nabla v \,\di x &= \int_{\Omega} \gamma_0\nabla w\cdot\nabla v \,\di x - \int_{V} \gamma_0\nabla (\varphi w)\cdot\nabla v \,\di x \\
			&= \int_V (F_1 - \gamma_0\nabla (\varphi w))\cdot\nabla v\,\di x.
		\end{align*}
		Hence, with $F = F_1 - \gamma_0\nabla (\varphi w)|_{V}$  
		\begin{equation*}
			f = w|_\Gamma = \widetilde{w}|_\Gamma = L_{\emptyset}^{\Sigma_\infty}(V)F,
		\end{equation*}
		and we have verified that $R(L_{\emptyset}^{\Sigma_\infty}(V)) \supseteq R(L_{\Sigma_0}^{\Sigma_\infty}(V))$.
	\end{proof}
	
	\begin{remark} \label{remark:rangeindep}
		If $\Sigma\Subset V$ in Proposition~\ref{prop:rangeindep}, then one can combine (i) and (ii) to obtain:
		\begin{equation*}
			R(L_{\emptyset}^{\emptyset}(V)) = R(L_{\Sigma_0}^{\emptyset}(V)) = R(L_{\emptyset}^{\Sigma_\infty}(V)) =  R(L_{\Sigma_0}^{\Sigma_\infty}(V)).
		\end{equation*}
		Indeed, after using (i) and (ii) as they are stated, we can use (i) again for a $(\Sigma_0',\Sigma_\infty')$ collection of cracks with $\Sigma_0' = \Sigma_0$ and $\Sigma_\infty' = \emptyset$.
	\end{remark}
	
	The following result will be needed in the proof of Proposition \ref{prop:locpot}. For reasons of clarity we state it as a separate lemma with its own proof. The proof makes essential use of single and double layer potentials; for an in-depth analysis of these in the context of Lipschitz surfaces we refer to \cite{Verchota1984,McLean}.\footnote{The additional regularity required of $\gamma_0$ in this lemma guarantees the validity of the ``appropriately rescaled" jump relations for the (single and double) layer potentials, typically stated in terms of the fundamental solution for the Laplacian.}
	
	\begin{lemma}\label{lemma:nontrivial}
	Let $\Sigma$ be a $(\Sigma_0,\Sigma_\infty)$ collection of cracks. Assume that $\gamma_0 \in C^2(\overline{\Omega})$ and is positive. 
	\begin{enumerate}[\rm(i)]
		\item If $\Sigma_0\neq \emptyset$ then there exists $f\in L^2_\diamond(\Gamma)$ such that $u_{\emptyset,f}^{\Sigma_\infty} \neq u_{\Sigma_0,f}^{\Sigma_\infty}$.
		\item If $\Sigma_\infty\neq \emptyset$ then there exists $f\in L^2_\diamond(\Gamma)$ such that $u_{\Sigma_0,f}^{\emptyset} \neq u_{\Sigma_0,f}^{\Sigma_\infty}$.
	\end{enumerate}
	\end{lemma}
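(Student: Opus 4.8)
The plan is to prove (i) by contradiction; (ii) then follows by the same argument with the roles of single and double layer potentials interchanged (and Dirichlet/Neumann conditions swapped on the crack), so I would only write (i) in detail and remark on (ii). Suppose, towards a contradiction, that $u_{\emptyset,f}^{\Sigma_\infty} = u_{\Sigma_0,f}^{\Sigma_\infty}$ for every $f\in L^2_\diamond(\Gamma)$. By Lemma~\ref{lemma:NDdiff}(i) this forces $P^\perp u = 0$, i.e.\ $u_{\Sigma_0,f}^{\Sigma_\infty} \in \Hs_{\emptyset}^{\Sigma_\infty}$ for all $f$; in particular the solution with the insulating crack $\Sigma_0$ is in fact an $H^1$ function across $\Sigma_0$ (no jump in the trace), while it still satisfies the homogeneous Neumann condition $\gamma_0\partial u/\partial n = 0$ on both sides of $\Sigma_0$. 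By interior elliptic regularity (using $\gamma_0\in C^2$) together with the UCP for $\nabla\cdot(\gamma_0\nabla\,\cdot\,)=0$ across the Lipschitz hypersurface $\Sigma_0$ — the jump in both the Dirichlet and the Neumann data vanishes — $u$ extends to a solution of the background equation in a connected neighbourhood of $\Sigma_0$ in $\Omega$, hence (again by UCP, since $\Omega\setminus\overline{\Sigma_0}$ is connected) $u$ equals $u_{\emptyset,f}^{\Sigma_\infty}$ throughout, which is consistent but not yet a contradiction: what it really says is that \emph{all} boundary data produce potentials with no interaction with $\Sigma_0$.

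The contradiction must therefore come from explicitly exhibiting one boundary current that does interact with $\Sigma_0$. Here is where the $C^2$ regularity and the layer-potential machinery enter. Fix a small relatively open, smooth, compactly contained piece $\sigma$ of one of the surfaces comprising $\Sigma_0$, with a chosen normal $n$. Let $\Phi(x,y)$ denote the fundamental solution of the background operator $\nabla\cdot(\gamma_0\nabla\,\cdot\,)$ (which exists and has the standard structure because $\gamma_0\in C^2$), and consider the single-layer-type source $v_0(x) = \int_\sigma \Phi(x,y)\,\psi(y)\,\di S(y)$ for a suitably chosen density $\psi$ supported in $\sigma$; or, to match the \emph{insulating} condition on $\Sigma_0$ more directly, a double-layer potential with density $\psi$. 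The relevant jump relations (double layer: jump in the trace equal to $\psi$, continuity of the co-normal derivative; single layer: continuity of the trace, jump $\psi$ in the co-normal derivative) hold precisely because of the assumed regularity of $\gamma_0$. I would use such a potential to build, in $\Omega\setminus\overline{\Sigma_0}$, a function $w$ solving $\nabla\cdot(\gamma_0\nabla w)=0$ off $\Sigma_0$, satisfying $\gamma_0\partial w/\partial n = 0$ on $\Sigma_0$ (hence locally constant co-normal data that can be arranged to vanish) but having $[\,w\,]\neq 0$ across $\sigma$, and with a well-defined co-normal trace $f := \gamma_0\partial w/\partial\nu$ on $\partial\Omega$; after subtracting its mean we may take $f\in L^2_\diamond(\Gamma)$ restricted to $\Gamma$ (adjusting so the flux vanishes on $\partial\Omega\setminus\overline\Gamma$, using that $\dist(\Sigma_0,\partial\Omega)>0$ gives us room to correct). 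By uniqueness in $\Hs_{\Sigma_0}^{\Sigma_\infty}$ this $w$ (grounded on $\Gamma$) equals $u_{\Sigma_0,f}^{\Sigma_\infty}$, and $[\,w\,]\neq 0$ shows $u_{\Sigma_0,f}^{\Sigma_\infty}\notin \Hs_{\emptyset}^{\Sigma_\infty}$, i.e.\ $u_{\emptyset,f}^{\Sigma_\infty}\neq u_{\Sigma_0,f}^{\Sigma_\infty}$ — the desired contradiction.

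The main obstacle is the construction in the previous paragraph: one must produce a global solution on $\Omega\setminus\overline{\Sigma_0}$ (not just locally near $\sigma$) that honours \emph{all} the side conditions — the homogeneous Neumann condition on \emph{all} of $\Sigma_0$, local constancy on $\Sigma_\infty$ with the zero-net-flux conditions on its components, and the mixed Neumann data on $\partial\Omega$ — while retaining a nonzero jump across $\sigma$. The clean way to handle this is not to insist the layer potential already satisfies everything, but to take $w = u_{\Sigma_0,f}^{\Sigma_\infty}$ for the $f$ read off from a \emph{localized} layer potential and then argue by UCP that if $[\,u_{\Sigma_0,f}^{\Sigma_\infty}\,]$ vanished on $\sigma$ then $u_{\Sigma_0,f}^{\Sigma_\infty}$ would coincide near $\sigma$ with the single smooth background solution having the same Cauchy data on a curve separating $\sigma$ from $\partial\Omega$, contradicting the jump built into the layer potential via its jump relation. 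Concretely: pick $f$ so that $u_{\emptyset,f}^{\Sigma_\infty}$ — the potential \emph{without} $\Sigma_0$ — has a genuinely nonzero co-normal derivative along $\sigma$ (possible since its restriction to a neighbourhood of $\sigma$ is a nonconstant real-analytic-in-the-normal-direction solution unless it is locally constant, which UCP forbids for a nonzero $f$); then the \emph{with-crack} solution $u_{\Sigma_0,f}^{\Sigma_\infty}$ must differ from it, because the former is forced to have vanishing co-normal derivative on $\sigma$ while agreeing with the latter would violate that. Making this last dichotomy rigorous — ``either the difference $P^\perp u$ is identically zero, which by UCP forces the background solution to have zero co-normal derivative along $\Sigma_0$, which cannot happen for suitably chosen $f$'' — is the crux, and it is exactly where the hypothesis $\Sigma_0\neq\emptyset$, the $C^2$ regularity (for the jump relations and unique continuation across the Lipschitz surface), and the localized-potential viewpoint all get used together.
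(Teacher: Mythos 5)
Your reduction at the start is the same as the paper's: both proofs reduce (i) to showing that $\tfrac{\partial}{\partial n}u^{\Sigma_\infty}_{\emptyset,f}$ does not vanish identically on $\Sigma_0$ for some $f\in L^2_\diamond(\Gamma)$. The difficulty lies entirely in proving that such an $f$ exists, and this is exactly where your argument has a genuine gap. In your third paragraph you assert that you may ``pick $f$ so that $u^{\Sigma_\infty}_{\emptyset,f}$ has a genuinely nonzero co-normal derivative along $\sigma$'' because ``UCP forbids'' $u^{\Sigma_\infty}_{\emptyset,f}$ from being locally constant near $\sigma$. But nonconstancy of a solution near a hypersurface does \emph{not} imply that its normal derivative on that hypersurface is nonzero: with $\gamma_0\equiv 1$, $u=x_1$, and $\sigma\subset\{x_d=0\}$, one has $\partial u/\partial n\equiv 0$ on $\sigma$ while $u$ is nowhere locally constant. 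So for a single $f$ the vanishing of $\partial_n u^{\Sigma_\infty}_{\emptyset,f}$ on $\Sigma_0$ is perfectly possible, and the step ``nonconstant $\Rightarrow$ nonvanishing co-normal derivative'' is simply false. The contradiction must come from the vanishing holding \emph{for all} $f$ simultaneously; you note this fact in your first paragraph but do not actually use it in the key step.

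What is missing is precisely a duality/density argument that exploits the freedom in $f$. The paper's proof does this through the fundamental solution $\Phi(x,y)$: writing $u^{\Sigma_\infty}_{\emptyset,f}(y)=\int_\Gamma\Phi(x,y)f(x)\,\di S_x$ and assuming $\partial_n u^{\Sigma_\infty}_{\emptyset,f}\equiv 0$ on $\Sigma_0$ for \emph{every} $f\in L^2_\diamond(\Gamma)$, one pairs against an arbitrary density $\psi$ on $\Sigma_0$ and applies Fubini to conclude that the double layer potential $x\mapsto\int_{\Sigma_0}\partial_{n_y}\Phi(x,y)\psi(y)\,\di S_y$ has constant Dirichlet trace and zero conormal derivative on $\Gamma$; UCP then forces it to be constant in $\Omega\setminus\overline{\Sigma_0}$, and the jump relation (this is where $\gamma_0\in C^2$ is used, to have the classical layer-potential jump across the Lipschitz surface) forces $\psi\equiv 0$, a contradiction. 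Your second paragraph gestures at layer potentials and jump relations, but tries to use them \emph{constructively} to build a global solution honouring all the boundary and crack conditions, a route you yourself observe is hard to close; the paper instead uses them \emph{dually}, which sidesteps that difficulty entirely. To fix your proof you would need to replace the ``pick $f$'' step by this duality argument (or an equivalent density-of-Cauchy-data statement for the family $\{u^{\Sigma_\infty}_{\emptyset,f}\}_f$ on $\Sigma_0$), not merely invoke nonconstancy.
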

	\begin{proof}
	Proof of (i): It suffices to prove that there exists an $f\in L^2_\diamond(\Gamma)$ such that 
	\begin{equation*}
		\frac{\partial}{\partial n}u_{\emptyset,f}^{\Sigma_\infty} \text{ does not vanish identically on } \Sigma_0.
	\end{equation*}
	Let $\Phi(x,y)$ denote the (fundamental) solution to the problem
	\begin{align*}
		-\nabla\cdot(\gamma_0\nabla \Phi(\,\cdot\,,y)) &= \delta_y \quad \text{in } \Omega\setminus \overline{\Sigma_\infty}, \\
		\gamma_0\frac{\partial}{\partial \nu}\Phi(\,\cdot\,,y) &= -\frac{1}{|\partial \Omega|} \quad \text{on } \partial \Omega, \\
		  & \hspace{-2.2cm} \Phi(\,\cdot\,,y) \text{ is locally constant on } \Sigma_\infty, \\
		\int_{\Sigma_i} \Bigl[\gamma_0\frac{\partial}{\partial n}\Phi(\,\cdot\,,y)\Bigr]\di S &= 0 \quad \text{for each component $\Sigma_i$ of $\Sigma_\infty$}.
     \end{align*}
	In terms of this $\Phi$ we have the representation formula
	\begin{equation*}
		u_{\emptyset,f}^{\Sigma_\infty}(y) =\int_{\Gamma} \Phi(x,y)f(x)\,\di S_x,
	\end{equation*}
	and so
	\begin{equation*}
		\frac{\partial}{\partial n} u_{\emptyset,f}^{\Sigma_\infty}(y) = \int_{\Gamma} \frac{\partial}{\partial n_y}\Phi(x,y)f(x)\,\di S_x \quad \text{on } \Sigma_0.
	\end{equation*}
	Now suppose that $\frac{\partial}{\partial n} u_{\emptyset,f}^{\Sigma_\infty}$ vanishes identically on $\Sigma_0$ for all $f \in  L^2_\diamond(\Gamma)$. Let $\psi$ be a non-zero continuous function on $\Sigma_0$, supported away from $\partial \Sigma_0$, then it would follow that
	\begin{equation*}
	0 = \int_{\Sigma_0} \psi(y)\int_{\Gamma} \frac{\partial}{\partial n_y}\Phi(x,y)f(x)\,\di S_x\,\di S_y = \int_{\Gamma} f(x) \int_{\Sigma_0} \frac{\partial}{\partial n_y}\Phi(x,y) \psi(y)\,\di S_y\,\di S_x
	\end{equation*}
	for all $f\in L^2_\diamond(\Gamma)$. In other words, there is a constant $C$ such that
	\begin{equation*}
		\int_{\Sigma_0} \frac{\partial}{\partial n_y}\Phi(x,y) \psi(y)\,\di S_y \equiv C \quad \text{on } \Gamma.
	\end{equation*}
	We also have that 
	\begin{equation*}
		\gamma_0(x)\frac{\partial}{\partial \nu}\int_{\Sigma_0} \frac{\partial}{\partial n_y}\Phi(x,y) \psi(y)\,\di S_y= \int_{\Sigma_0} \frac{\partial}{\partial n_y}\Bigl(\gamma_0(x)\frac{\partial}{\partial \nu_x} \Phi(x,y)\Bigr) \psi(y)\,\di S_y \equiv 0 \quad \text{on } \Gamma.
	\end{equation*}
	By unique continuation of the operator $\nabla\cdot (\gamma_0\nabla (\,\cdot\,))$ it follows that 
	\begin{equation*}
		\int_{\Sigma_0} \frac{\partial}{\partial n_y}\Phi(x,y) \psi(y)\,\di S_y \equiv C \quad \text{in }\Omega \setminus \overline{\Sigma_0}.
	\end{equation*}
	The jump relation for double layer potentials (see \cite{Verchota1984,McLean}) now asserts that
	$$
	\gamma_0(x)^{-1}\psi(x) = \left[ \int_{\Sigma_0} \frac{\partial}{\partial n_y}\Phi(x,y) \psi(y)\,\di S_y  \right] = 0 ~\hbox{ on } \Sigma_0.
	$$
	This is contrary to the initial assumption that $\psi$ is non-zero.\footnote{The $C^2$ regularity of $\gamma_0$ is sufficient to guarantee that the fundamental solution $\Phi(x,y)$ has the form $\Phi(x,y)= \gamma_0(y)^{-1}\Phi_{\Delta}(x,y) +R(x,y)=\gamma_0(x)^{-1}\Phi_{\Delta}(x,y) +\widetilde R(x,y)$, where $\Phi_\Delta$ is the standard Green's function for the Laplacian, and the ``regular terms" $R$ and $\widetilde R $ do not contribute to the jump relations.}
	
	\bigskip
	
	Proof of (ii): It suffices to prove that there exists an $f\in L^2_\diamond(\Gamma)$ such that 
	\begin{equation*}
		u_{\Sigma_0,f}^{\emptyset} \text{ is not locally constant on } \Sigma_\infty.
	\end{equation*}
	Let $\Psi(x,y)$ denote the (fundamental) solution to the problem
	\begin{align*}
		-\nabla\cdot(\gamma_0\nabla \Psi(\,\cdot\,,y)) &= \delta_y \quad \text{in } \Omega\setminus \overline{\Sigma_0}, \\
		\gamma_0\frac{\partial}{\partial \nu}\Psi(\,\cdot\,,y) &= -\frac{1}{|\partial \Omega|} \quad \text{on } \partial \Omega, \\
		\gamma_0\frac{\partial}{\partial n}\Psi(\,\cdot\,,y) &= 0 \quad \text{on } \Sigma_0. 
	\end{align*}
	We have the representation formula
	\begin{equation*}
		u_{\Sigma_0,f}^{\emptyset}(y) =\int_{\Gamma} \Psi(x,y)f(x)\,\di S_x.
	\end{equation*}
	Now suppose that $u_{\Sigma_0,f}^{\emptyset}$ is locally constant on $\Sigma_\infty$ for all $f \in  L^2_\diamond(\Gamma)$. Let $\varphi$ be a non-zero continuous function on $\Sigma_\infty$, supported away from $\partial\Sigma_\infty$, and with vanishing mean on each component of $\Sigma_\infty$ (i.e.\ orthogonal to $u_{\Sigma_0,f}^{\emptyset}$ on $\Sigma_\infty$ in the $L^2$ inner product), then it would follow that
	\begin{equation*}
		0 = \int_{\Sigma_\infty} \varphi(y)\int_{\Gamma} \Psi(x,y)f(x)\,\di S_x\,\di S_y = \int_{\Gamma} f(x) \int_{\Sigma_\infty} \Psi(x,y) \varphi(y)\,\di S_y\,\di S_x
	\end{equation*}
	for all $f\in L^2_\diamond(\Gamma)$. Thus, there is a constant $C$ such that
	\begin{equation*}
		\int_{\Sigma_\infty} \Psi(x,y) \varphi(y)\,\di S_y \equiv C \quad \text{on } \Gamma.
	\end{equation*}
	Since $\varphi$ has vanishing mean on $\Sigma_\infty$, we also have that 
	\begin{equation*}
		\gamma_0(x)\frac{\partial}{\partial \nu}\int_{\Sigma_\infty} \Psi(x,y) \varphi(y)\,\di S_y = \int_{\Sigma_\infty} \Bigl(\gamma_0(x)\frac{\partial}{\partial \nu_x} \Psi(x,y)\Bigr) \varphi(y)\,\di S_y \equiv 0 \quad \text{on } \Gamma.
	\end{equation*}
	By unique continuation, it follows that 
	\begin{equation*}
		\int_{\Sigma_\infty} \Psi(x,y) \varphi(y)\,\di S_y \equiv C \quad \text{in }\Omega \setminus \overline{\Sigma}.
	\end{equation*}
	The jump relation for the normal derivative of single layer potentials (see \cite{Verchota1984,McLean}) now asserts that
	$$
	-\gamma_0(x)^{-1}\varphi(x)= \left[ \frac{\partial }{\partial n}\int_{\Sigma_\infty} \Psi(x,y) \varphi(y)\,\di S_y \right]=0~\hbox{ on } \Sigma_\infty .
	$$
	The fact that $\varphi$ must identically vanish on $\Sigma_\infty$ represents a contradiction. 
	\end{proof}

	\begin{proposition} \label{prop:locpot}
		Let $\Sigma$ be a $(\Sigma_0,\Sigma_\infty)$ collection of cracks. Assume that $\Sigma_0\Subset V$ and $\Sigma_\infty \Subset W$ for $V,W \in\A$ with $\dist(V,W)>0$. 
		\begin{enumerate}[\rm(i)]
			\item If $\Sigma_0\neq\emptyset$ then there exists a sequence $(f_n)$ in $L^2_\diamond(\Gamma)$ such that
			\begin{align*}
				\lim_{n\to\infty}\inner{(\Lambda_{W}^{\emptyset} - \Lambda_{\emptyset}^{\emptyset})f_n,f_n} &= 0, \\
				\lim_{n\to\infty}\inner{(\Lambda_{\emptyset}^{\emptyset} - \Lambda_{\emptyset}^{W})f_n,f_n} &= 0, \\
				\lim_{n\to\infty}\inner{(\Lambda_{\Sigma_0}^{\Sigma_\infty}-\Lambda_{\emptyset}^{\Sigma_\infty})f_n,f_n} &= \infty.
			\end{align*}
			\item If $\Sigma_\infty\neq\emptyset$ then there exists a sequence $(g_n)$ in $L^2_\diamond(\Gamma)$ such that
			\begin{align*}
				\lim_{n\to\infty}\inner{(\Lambda_{V}^{\emptyset} - \Lambda_{\emptyset}^{\emptyset})g_n,g_n} &= 0, \\
				\lim_{n\to\infty}\inner{(\Lambda_{\emptyset}^{\emptyset} - \Lambda_{\emptyset}^{V})g_n,g_n} &= 0, \\
				\lim_{n\to\infty}\inner{(\Lambda_{\Sigma_0}^{\emptyset}-\Lambda_{\Sigma_0}^{\Sigma_\infty})g_n,g_n} &= \infty.
			\end{align*}
		\end{enumerate}
	\end{proposition}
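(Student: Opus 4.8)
The plan is to obtain each of (i) and (ii) from a single application of the constructive localised‑potentials lemma (Lemma~\ref{lemma:locgen}), with $H=L^2_\diamond(\Gamma)$ and the two operators built from the structures of Sections~\ref{sec:locpot1}--\ref{sec:locpot2}. I describe (i); (ii) is proved the same way after interchanging $V\leftrightarrow W$, replacing $\Lambda_{\Sigma_0}^{\Sigma_\infty}-\Lambda_\emptyset^{\Sigma_\infty}$ and $P^\perp$ by $\Lambda_{\Sigma_0}^{\emptyset}-\Lambda_{\Sigma_0}^{\Sigma_\infty}$ and $Q^\perp$, and using Lemma~\ref{lemma:NDdiff}(ii), Proposition~\ref{prop:rangeindep}(i), and Lemma~\ref{lemma:nontrivial}(ii) in place of their counterparts.

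For (i), first fix $\widehat W\in\A$ with $\Sigma_\infty\Subset W\Subset\widehat W$ and $\dist(V,\widehat W)>0$ (possible since $\dist(V,W)>0$). Define $A_1\in\mathscr{L}(\Hs_{\Sigma_0}^{\Sigma_\infty},L^2_\diamond(\Gamma))$ by $A_1 v=(P^\perp v)|_\Gamma$ and set $A_2=L_\emptyset^{\Sigma_\infty}(\widehat W)$. Computing in the $\inner{\,\cdot\,,\,\cdot\,}_*$ inner product gives $A_1^* f=P^\perp u_{\Sigma_0,f}^{\Sigma_\infty}$, while \eqref{eq:Ladj} gives $A_2^* f=\nabla u_{\emptyset,f}^{\Sigma_\infty}|_{\widehat W}$. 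By Lemma~\ref{lemma:NDdiff}(i), $\norm{A_1^* f}_*^2=\inner{(\Lambda_{\Sigma_0}^{\Sigma_\infty}-\Lambda_\emptyset^{\Sigma_\infty})f,f}$, so the conclusion $\norm{A_1^* f_n}_*\to\infty$ of Lemma~\ref{lemma:locgen} is exactly the third limit. For the first two, I would establish the elementary monotonicity bounds $\inner{(\Lambda_{\widehat W}^{\emptyset}-\Lambda_\emptyset^\emptyset)f,f}\le c\norm{\nabla u_{\emptyset,f}^\emptyset}_{L^2(\widehat W)}^2$ and $\inner{(\Lambda_\emptyset^\emptyset-\Lambda_\emptyset^{\widehat W})f,f}\le c\norm{\nabla u_{\emptyset,f}^\emptyset}_{L^2(\widehat W)}^2$: the first from the identity $\inner{(\Lambda_{\widehat W}^{\emptyset}-\Lambda_\emptyset^\emptyset)f,f}=\int_{\Omega\setminus\widehat W}\gamma_0|\nabla\delta|^2+\int_{\widehat W}\gamma_0|\nabla u_{\emptyset,f}^\emptyset|^2$ with $\delta=u_{\widehat W,f}^\emptyset-u_{\emptyset,f}^\emptyset$, together with $\int_{\Omega\setminus\widehat W}\gamma_0|\nabla\delta|^2=\int_{\widehat W}\gamma_0\nabla u_{\emptyset,f}^\emptyset\cdot\nabla\widetilde\delta$ for a Sobolev extension $\widetilde\delta$ and a Poincaré estimate; the second from the projection identity $\inner{(\Lambda_\emptyset^\emptyset-\Lambda_\emptyset^{\widehat W})f,f}=\norm{u_{\emptyset,f}^\emptyset-u_{\emptyset,f}^{\widehat W}}_*^2$ (as in Lemma~\ref{lemma:NDdiff}) and a comparison function obtained from $u_{\emptyset,f}^\emptyset$ by cutting it off to its component averages on $W\Subset\widehat W$. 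Since $\Sigma_\infty\Subset\widehat W$, Proposition~\ref{prop:rangeindep}(ii) gives $R(L_\emptyset^\emptyset(\widehat W))=R(L_\emptyset^{\Sigma_\infty}(\widehat W))$, so Lemma~\ref{lemma:rangenorm} yields $\norm{\nabla u_{\emptyset,f}^\emptyset|_{\widehat W}}\le C\norm{A_2^* f}$; hence $\norm{A_2^* f_n}\to0$ forces both remaining limits to vanish.

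It remains to check the two hypotheses of Lemma~\ref{lemma:locgen}. Injectivity of $A_2^*$: if $\nabla u_{\emptyset,f}^{\Sigma_\infty}\equiv0$ on $\widehat W$ then it vanishes on the non‑empty open set $\widehat W\setminus\overline{\Sigma_\infty}$, so by the weak UCP $u_{\emptyset,f}^{\Sigma_\infty}$ is constant on the connected set $\Omega\setminus\overline{\Sigma_\infty}$, hence $\equiv0$ by the $\diamond$‑normalisation, whence $f=0$. The existence of $y_0\in R(A_1)\setminus R(A_2)$ is the crux, and follows once we know (a) $R(A_1)\neq\{0\}$ and (b) $R(A_1)\cap R(A_2)=\{0\}$. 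For (a): by Lemma~\ref{lemma:NDdiff}(i) one has $P^\perp u_{\Sigma_0,f}^{\Sigma_\infty}=u_{\Sigma_0,f}^{\Sigma_\infty}-u_{\emptyset,f}^{\Sigma_\infty}$, which is non‑zero for a suitable $f$ by Lemma~\ref{lemma:nontrivial}(i) (this is where the $C^2$‑regularity of $\gamma_0$ enters). For (b): given $y_0\in R(A_1)\cap R(A_2)$, write $y_0=\widetilde v|_\Gamma$ with $\widetilde v=P^\perp v_1\in(\Hs_\emptyset^{\Sigma_\infty})^\perp$ and $y_0=w|_\Gamma$ with $w=w_{\emptyset,F}^{\Sigma_\infty}$. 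Unwinding the orthogonality of $\widetilde v$ to $\Hs_\emptyset^{\Sigma_\infty}$ shows $\widetilde v$ is $\gamma_0$‑harmonic in $\Omega\setminus\overline\Sigma$, with $[\gamma_0\partial_n\widetilde v]=0$ on $\Sigma_0$, the usual integral conditions on $\Sigma_\infty$, and $\gamma_0\partial_\nu\widetilde v=0$ on $\partial\Omega$ (the only boundary contribution on $\partial\Omega$ is a constant flux on $\Gamma$, which vanishes by the global flux balance for $\widetilde v$); likewise $w$ is $\gamma_0$‑harmonic in $\Omega\setminus\overline{\widehat W}$ with $\gamma_0\partial_\nu w=0$ on $\partial\Omega$. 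The set $\Omega\setminus(\overline{\widehat W}\cup\overline{\Sigma_0})$ is open and connected (the hypersurfaces‑with‑boundary $\overline{\Sigma_0}\subset V$ do not disconnect, and $\dist(V,\widehat W)>0$), $\widetilde v-w$ is $\gamma_0$‑harmonic there with vanishing Cauchy data on $\Gamma$, so by UCP from Cauchy data $\widetilde v\equiv w$ on it, in particular on both sides of $\Sigma_0$. Since $w$ is single‑valued and $\gamma_0$‑harmonic across $\Sigma_0$, $\widetilde v$ has no jump across $\Sigma_0$, i.e. $\widetilde v\in\Hs_\emptyset^{\Sigma_\infty}$; with $\widetilde v\in(\Hs_\emptyset^{\Sigma_\infty})^\perp$ this forces $\widetilde v=0$ and $y_0=0$.

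Thus $R(A_1)\not\subseteq R(A_2)$, Lemma~\ref{lemma:locgen} applies, and $f_n$ is taken to be the sequence $x_n$ it produces. The main obstacle is step (b): pushing a unique‑continuation argument through the jump condition across the cracks so as to separate $R(A_1)$ from $R(A_2)$. This is where the non‑invisibility input Lemma~\ref{lemma:nontrivial} and the geometric hypotheses $\dist(V,W)>0$, $\Sigma_0\Subset V$, $\Sigma_\infty\Subset W$ are genuinely used, and it is the reason one needs the \emph{constructive} (rather than merely existential) localised‑potentials lemma: it guarantees that $u_{\Sigma_0,f_n}^{\Sigma_\infty}$ and $u_{\emptyset,f_n}^{\Sigma_\infty}$ are driven by one and the same explicitly built current, which is what makes the difference $\norm{u_{\Sigma_0,f_n}^{\Sigma_\infty}-u_{\emptyset,f_n}^{\Sigma_\infty}}_*$ blow up rather than merely both terms separately.
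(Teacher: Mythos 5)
Your proposal is correct in its essentials and takes a genuinely different route from the paper's. The paper shows $R(L_\emptyset^{\Sigma_\infty}(V))\cap R(L_\emptyset^{\Sigma_\infty}(Y))=\{0\}$ by \emph{gluing} the two $\gamma_0$-harmonic potentials $u_V$, $u_Y$ (which agree on $\overline\Omega\setminus(V\cup Y)$ by UCP) into a globally defined solution that must then vanish, and obtains $y_0\in R(A)\setminus R(L_\emptyset^{\Sigma_\infty}(Y))$ with $A=L_{\Sigma_0}^{\Sigma_\infty}(V)-L_\emptyset^{\Sigma_\infty}(V)$, after which a further chain of range equalities and \cite[Lemmas A.1 and 5.3]{Garde2020} is used to control all three limits. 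You instead take $A_1v=(P^\perp v)|_\Gamma$ so that $R(A_1)$ is exactly the set of traces of the orthogonal complement $(\Hs_\emptyset^{\Sigma_\infty})^\perp$, and you prove the range disjointness $R(A_1)\cap R(A_2)=\{0\}$ by the jump argument: UCP forces $\widetilde v=w$ away from $\overline{\widehat W}\cup\overline{\Sigma_0}$, and since $w\in H^1(\Omega)$ has no jump across $\Sigma_0$, neither does $\widetilde v$, placing it in $\Hs_\emptyset^{\Sigma_\infty}\cap(\Hs_\emptyset^{\Sigma_\infty})^\perp=\{0\}$. This is cleaner in one respect: the blow-up $\inner{(\Lambda_{\Sigma_0}^{\Sigma_\infty}-\Lambda_\emptyset^{\Sigma_\infty})f_n,f_n}\to\infty$ falls straight out of Lemma~\ref{lemma:NDdiff}(i) with no need to pass through $L^2(V)^d$-norms. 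Your handling of the first two limits, replacing the paper's reliance on \cite[Lemma 5.3]{Garde2020} by the direct bounds $\inner{(\Lambda_W^\emptyset-\Lambda_\emptyset^\emptyset)f,f}\le c\int_{\widehat W}|\nabla u_{\emptyset,f}^\emptyset|^2$ (via the Sobolev-extension/Poincaré identity) and $\inner{(\Lambda_\emptyset^\emptyset-\Lambda_\emptyset^W)f,f}\le c\int_{\widehat W}|\nabla u_{\emptyset,f}^\emptyset|^2$ (via a cutoff test function), is also sound and possibly a modest simplification, though the extension-operator bound in gradient seminorm should be justified using the $\Gamma$-normalization of $\delta=u_{W,f}^\emptyset-u_{\emptyset,f}^\emptyset$.

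One point does deserve attention: the claim that case (ii) follows by straightforward interchange hides a real asymmetry. In (i), the decisive step is that $w\in H^1(\Omega)$ has no jump across $\Sigma_0$, which transfers to $\widetilde v$ and forces $\widetilde v\in\Hs_\emptyset^{\Sigma_\infty}$. The analogous statement in (ii) would be that $w=w_{\Sigma_0,F}^\emptyset$ is \emph{locally constant on} $\Sigma_\infty$, but it is not — it is merely $\gamma_0$-harmonic in a neighbourhood of $\Sigma_\infty$. What one actually inherits from $w$ is continuity of the normal flux $[\gamma_0\partial_n\widetilde v]=0$ across $\Sigma_\infty$; combined with the orthogonality conditions ($\gamma_0$-harmonic off $\Sigma$, vanishing one-sided fluxes on $\Sigma_0$, zero Neumann data on $\partial\Omega$) this forces $\nabla\widetilde v\equiv 0$, hence $\widetilde v=0$, but by an energy argument rather than by membership in $\Hs_{\Sigma_0}^{\Sigma_\infty}\cap(\Hs_{\Sigma_0}^{\Sigma_\infty})^\perp$. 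You should spell this out rather than say the cases are symmetric. With that repair and the Sobolev-extension detail filled in, the argument is complete.
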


	\begin{proof}
		We give a proof of (i), and note that the proof of (ii) is almost the same with obvious modifications. 
		
		The first part of this proof, up to \eqref{eq:rangecap}, is a slight modification of \cite[Proof of Theorem~3.6]{Harrach13}, and the arguments are essentially the same, however, our setup includes perfectly conducting cracks. Due to the notational differences we give the full details. 
		
		Let $Y\in\mathcal{A}$ such that $W\Subset Y$ and $\dist(V,Y)>0$. We will construct a localisation for certain electric potentials, such that their energy tends to zero on the slightly larger set $Y$ instead of just on $W$. We make use of this fact to establish \eqref{eq:Wloc} towards the end of the proof. 
		
		Let $U = \overline{\Omega}\setminus (V\cup Y)$ then $U$ is connected, $U$ is relatively open in $\overline{\Omega}$, and $\partial\Omega\subset U$. For any 
		\begin{equation*}
			h\in R(L_\emptyset^{\Sigma_\infty}(V)) \cap R(L_\emptyset^{\Sigma_\infty}(Y))
		\end{equation*}
		there exist $F_V\in L^2(V)^d$ and $F_Y\in L^2(Y)^d$ such that $h = L_\emptyset^{\Sigma_\infty}(V)F_V = L_\emptyset^{\Sigma_\infty}(Y)F_Y$. Let $u_V = w_{\emptyset,F_V}^{\Sigma_\infty}$ and $u_Y = w_{\emptyset,F_Y}^{\Sigma_\infty}$. Thus $u_V,u_Y\in \Hs_{\emptyset}^{\Sigma_\infty}$ and from the variational formulations we obtain the equations
		\begin{align*}
			-\nabla \cdot(\gamma_0 \nabla u_V) &= 0 \quad \text{in } \Omega\setminus (V\cup \overline{\Sigma_\infty}), \\
			-\nabla \cdot(\gamma_0 \nabla u_Y) &= 0 \quad \text{in } \Omega\setminus Y, \\
			u_V = u_Y &= h \quad \text{on } \Gamma, \\
			\gamma_0\frac{\partial u_V}{\partial\nu} = \gamma_0\frac{\partial u_Y}{\partial\nu} &= 0 \quad \text{on } \partial\Omega.
		\end{align*}
		The variational formulation also gives that
		\begin{equation*}
			\int_{\Sigma_i}\Bigl[ \gamma_0 \frac{\partial u_V}{\partial n}\Bigr] \di S =0 \quad \text{for each component $\Sigma_i$ of $\Sigma_\infty$.}
		\end{equation*}
		Due to the  unique continuation property (UCP) we get that $u_V|_U = u_Y|_U$. By gluing $u_V$ and $u_Y$ together, we obtain the function
		\begin{equation*}
			u = \begin{cases}
				u_V & \text{in } \Omega\setminus V, \\
				u_Y & \text{in } V,
			\end{cases}
		\end{equation*} 
		which, due to the boundary  value problem it solves, must equal $u_{\emptyset,0}^{\Sigma_\infty}=0$. Consequently $h = u|_\Gamma = 0$, and so we have proven
		\begin{equation} \label{eq:rangecap}
			R(L_\emptyset^{\Sigma_\infty}(V)) \cap R(L_\emptyset^{\Sigma_\infty}(Y)) = \{0\}.
		\end{equation}
		%
		%The operator $(L_\emptyset^{\Sigma_\infty}(V))^*$ is injective because of \eqref{eq:Ladj} and %unique continuation, so in particular $R(L_\emptyset^{\Sigma_\infty}(V)) \neq \{0\}$ as the range is dense %in $L_\diamond^2(\Gamma)$. We have thereby proven
		%
		%\begin{equation} \label{eq:noninc}
			%R(L_\emptyset^{\Sigma_\infty}(V)) \not\subseteq R(L_\emptyset^{\Sigma_\infty}(Y)).
		%\end{equation}
		%
		Since $\Sigma_0\Subset V$, we get from Proposition~\ref{prop:rangeindep}(i) that
		\begin{equation} \label{eq:rangeeq}
			R(L_{\emptyset}^{\Sigma_\infty}(V)) = R(L_{\Sigma_0}^{\Sigma_\infty}(V)).
		\end{equation}
		Define 
		\begin{equation*}
			A = L_{\Sigma_0}^{\Sigma_\infty}(V) - L_\emptyset^{\Sigma_\infty}(V).
		\end{equation*}
		Due to \eqref{eq:Ladj} we have 
		\begin{equation} \label{eq:Astar}
			A^*f = (L_{\Sigma_0}^{\Sigma_\infty}(V))^*f - (L_{\emptyset}^{\Sigma_\infty}(V))^*f = \nabla (u^{\Sigma_\infty}_{\Sigma_0, f} - u^{\Sigma_\infty}_{\emptyset, f})|_V.
		\end{equation}
		Using unique continuation and the mean-free conditions on $\Gamma$ for $u^{\Sigma_\infty}_{\Sigma_0, f}$ and $u^{\Sigma_\infty}_{\emptyset, f}$, we now conclude that
		\begin{equation*}
			A^*f = 0 \qquad \text{if and only if} \qquad u^{\Sigma_\infty}_{\Sigma_0, f} = u^{\Sigma_\infty}_{\emptyset, f}.
		\end{equation*}
		Since $\Sigma_0\neq\emptyset$, lemma~\ref{lemma:nontrivial}(i) implies $A^* \neq 0$ and therefore $A \neq 0$. As a consequence there exists $g\in R(A)\setminus\{0\}$, and because of \eqref{eq:rangecap} and \eqref{eq:rangeeq} this $g$ satisfies $g\in R(L_\emptyset^{\Sigma_\infty}(V))$ and $g\not\in R(L_\emptyset^{\Sigma_\infty}(Y))$. Moreover, $(L_\emptyset^{\Sigma_\infty}(Y))^*$ is injective by \eqref{eq:Ladj} and unique continuation. We then apply the constructive result on localised potentials in Lemma~\ref{lemma:locgen}, and for $n\in\N$ define
		\begin{equation*}
			\xi_n = \bigl(L_\emptyset^{\Sigma_\infty}(Y)(L_\emptyset^{\Sigma_\infty}(Y))^* + \tfrac{1}{n}I\bigr)^{-1}g
		\end{equation*}
		and
		\begin{equation*}
			f_n =  \frac{\xi_n}{\norm{(L_\emptyset^{\Sigma_\infty}(Y))^*\xi_n}_{L^2(Y)^d}^{3/2}}.
		\end{equation*}
		Lemma~\ref{lemma:locgen} implies that 
		\begin{align}
			\lim_{n\to\infty}\norm{(L_\emptyset^{\Sigma_\infty}(Y))^*f_n}_{L^2(Y)^d} &= 0, \label{eq:wlim1}\\
			\lim_{n\to\infty}\norm{(L_\emptyset^{\Sigma_\infty}(V))^*f_n}_{L^2(V)^d} &= \infty, \label{eq:vlim1}\\
			\text{and } ~\lim_{n\to\infty}\norm{A^*f_n}_{L^2(V)^d} &= \infty, \label{eq:Jlim}
		\end{align}
		simultaneously.	Since $\Sigma_\infty\Subset Y$, Proposition~\ref{prop:rangeindep}(ii) gives that $R(L_\emptyset^{\Sigma_\infty}(Y)) = R(L_\emptyset^{\emptyset}(Y))$. This range equality, together with \eqref{eq:wlim1} and Lemma~\ref{lemma:rangenorm}, implies
		\begin{equation} \label{eq:wlim2}
			\lim_{n\to\infty}\norm{(L_\emptyset^{\emptyset}(Y))^*f_n}_{L^2(Y)^d} = 0.
		\end{equation}
		From Proposition~\ref{prop:rangeindep}(ii) we have $R(L_\emptyset^{\Sigma_\infty}(V\cup Y)) = R(L_\emptyset^{\emptyset}(V\cup Y))$. So from \eqref{eq:Ladj} and Lemma~\ref{lemma:rangenorm} there exists a $c>0$ such that
		\begin{align}
			\norm{(L_\emptyset^{\emptyset}(V))^*f_n}_{L^2(V)^d}^2 + \norm{(L_\emptyset^{\emptyset}(Y))^*f_n}_{L^2(Y)^d}^2 \hspace{-3cm}&\notag\\
			&= \norm{(L_\emptyset^{\emptyset}(V\cup Y))^*f_n}_{L^2(V\cup Y)^d}^2 \notag\\
			&\geq c\norm{(L_\emptyset^{\Sigma_\infty}(V\cup Y))^*f_n}_{L^2(V\cup Y)^d}^2 \notag\\
			&\geq c\norm{(L_\emptyset^{\Sigma_\infty}(V))^*f_n}_{L^2(V)^d}^2. \label{eq:VWineq}
		\end{align}
		A combination of  \eqref{eq:vlim1}, \eqref{eq:wlim2}, and \eqref{eq:VWineq} gives
		\begin{equation} \label{eq:vlim2}
			\lim_{n\to\infty}\norm{(L_\emptyset^{\emptyset}(V))^*f_n}_{L^2(V)^d} = \infty.
		\end{equation}
		
		We are now ready to associate the different operators to differences of ND mappings. Let $\widehat{u}_n = u_{\emptyset,f_n}^{\emptyset}$. Then \eqref{eq:Ladj}, \eqref{eq:wlim2}, and \eqref{eq:vlim2} translate into 
		\begin{equation} \label{eq:locbg}
			\lim_{n\to\infty} \int_{V} \abs{\nabla \widehat{u}_n}^2\,\di x = \infty \qquad \text{and} \qquad \lim_{n\to\infty} \int_{Y} \abs{\nabla \widehat{u}_n}^2\,\di x = 0.
		\end{equation}
		Using \cite[Lemma~A.1(ii)]{Garde2020}, we obtain the existence of a constant $K>0$, independent of $f_n$, such that
		\begin{equation} \label{eq:Weq1}
			0 \leq \inner{(\Lambda_{\emptyset}^{\emptyset} - \Lambda_{\emptyset}^{W})f_n,f_n} \leq K\int_W \abs{\nabla \widehat{u}_n}^2\,\di x.
		\end{equation}
		Let $\breve{u}_n$ equal $u_{W,f_n}^{\emptyset}$ in $\Omega\setminus W$ and satisfy the following Dirichlet problem in $W$:
		\begin{align*}
			-\nabla \cdot(\gamma_0\nabla \breve{u}_n) &= 0 \quad \text{in the interior of } W, \\
			\breve{u}_n &= u_{W,f_n}^{\emptyset} \quad \text{on } \partial W. 
		\end{align*}
		The result in \cite[Lemma~5.3]{Garde2020} ensures that if an extreme inclusion (either perfectly insulating or perfectly conducting) is introduced, \emph{compactly contained} in $Y$ where the energy of $\widehat{u}_n$ tends to zero, then the corresponding electric potentials with this new conductivity profile will have the same localisation as in \eqref{eq:locbg}. For a perfectly insulating inclusion, $W \Subset Y$, the localisation applies to $\breve{u}_n$, using precisely the extension in $W$ defined above. We thus have
		\begin{equation} \label{eq:Wloc}
			\lim_{n\to\infty} \int_{Y} \abs{\nabla \breve{u}_n}^2\,\di x = 0.
		\end{equation}
		Use of \cite[Lemma~A.1(iii)]{Garde2020} yields
		\begin{equation} \label{eq:Weq2}
			0 \leq \inner{(\Lambda_{W}^{\emptyset} - \Lambda_{\emptyset}^{\emptyset})f_n,f_n} \leq \esssup(\gamma_0)\int_W \abs{\nabla \breve{u}_n}^2\,\di x.
		\end{equation}
		From \eqref{eq:locbg}--\eqref{eq:Weq2} we now conclude
		\begin{equation*}
			\lim_{n\to\infty}\inner{(\Lambda_{W}^{\emptyset} - \Lambda_{\emptyset}^{\emptyset})f_n,f_n} = \lim_{n\to\infty}\inner{(\Lambda_{\emptyset}^{\emptyset} - \Lambda_{\emptyset}^{W})f_n,f_n} = 0.
		\end{equation*}
	
		It remains to prove that $\inner{(\Lambda_{\Sigma_0}^{\Sigma_\infty}-\Lambda_{\emptyset}^{\Sigma_\infty})f_n,f_n}\to\infty$ for $n\to\infty$. Let $u_n = u_{\Sigma_0,f_n}^{\Sigma_\infty}$ and $\widetilde{u}_n = u_{\emptyset,f_n}^{\Sigma_\infty}$, then from Lemma~\ref{lemma:NDdiff}(i) we have
		\begin{equation*}
			\inner{(\Lambda_{\Sigma_0}^{\Sigma_\infty}-\Lambda_{\emptyset}^{\Sigma_\infty})f_n,f_n} = \norm{u_n-\widetilde{u}_n}_{*}^2 \geq \essinf(\gamma_0)\int_{V} \abs{\nabla(u_n-\widetilde{u}_n)}^2\,\di x.
		\end{equation*}
		At the same time from \eqref{eq:Astar} 
		\begin{equation*}
			A^*f_n = \nabla (u_n-\widetilde{u}_n)|_V,
		\end{equation*}
		so use of \eqref{eq:Jlim} concludes the proof.
	\end{proof}
	
	\section{The direction ``$\Leftarrow$'' in Theorems~\ref{thm:main} \& \ref{thm:main2}} \label{sec:proof_main2}
	
	Finally we show the ``difficult direction'' of the \emph{if and only if} statements of Theorems~\ref{thm:main} and \ref{thm:main2}, although the majority of the work has already been done in Proposition~\ref{prop:locpot}.
	
	\begin{proposition} \label{prop:main2}
		Let $D$ be a $(D_0,D_\infty)$ collection of cracks. Given any $C\in\A$, then
		\begin{equation*}
			\Lambda_{C}^{\emptyset}\geq \Lambda_{D_0}^{D_\infty} \geq \Lambda_{\emptyset}^C \qquad \text{implies} \qquad D\subset C.
		\end{equation*}
	\end{proposition}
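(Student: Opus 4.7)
The plan is to prove the contrapositive: if $D\not\subset C$, then at least one of the two operator inequalities $\Lambda_C^\emptyset\ge\Lambda_{D_0}^{D_\infty}$ and $\Lambda_{D_0}^{D_\infty}\ge\Lambda_\emptyset^C$ must fail. Splitting into the sub-cases $D_0\not\subset C$ and $D_\infty\not\subset C$, I invoke Proposition~\ref{prop:locpot}(i) and (ii) respectively, coupled with the monotonicity of Proposition~\ref{prop:main12}, to produce a sequence of boundary currents violating the relevant inequality.

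Consider first the insulating case $D_0\not\subset C$. Pick a component $\sigma_{i_0}$ of $D_0$ and a point $x_0\in\sigma_{i_0}\setminus C$, and extract a small Lipschitz subsurface $\tilde\sigma\in\X$ of $\sigma_{i_0}$ near $x_0$, entirely disjoint from $C\cup D_\infty$. Choose $V\in\A$ with $\tilde\sigma\Subset V$ and $V\cap(C\cup D_\infty)=\emptyset$, and $W\in\A$ with $D_\infty\Subset W$, $C\subseteq W$, and $\dist(V,W)>0$. Applying Proposition~\ref{prop:locpot}(i) with $\Sigma_0=\tilde\sigma$ and $\Sigma_\infty=D_\infty$ yields a sequence $(f_n)$ with $\inner{(\Lambda_{\tilde\sigma}^{D_\infty}-\Lambda_\emptyset^{D_\infty})f_n,f_n}\to\infty$, while the two control quantities $\inner{(\Lambda_W^\emptyset-\Lambda_\emptyset^\emptyset)f_n,f_n}$ and $\inner{(\Lambda_\emptyset^\emptyset-\Lambda_\emptyset^W)f_n,f_n}$ tend to $0$. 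Decomposing
\[
\Lambda_{\tilde\sigma}^{D_\infty}-\Lambda_C^\emptyset=(\Lambda_{\tilde\sigma}^{D_\infty}-\Lambda_\emptyset^{D_\infty})+(\Lambda_\emptyset^{D_\infty}-\Lambda_\emptyset^\emptyset)+(\Lambda_\emptyset^\emptyset-\Lambda_C^\emptyset),
\]
the two error terms are controlled by the operator bounds $0\le\Lambda_\emptyset^\emptyset-\Lambda_\emptyset^{D_\infty}\le\Lambda_\emptyset^\emptyset-\Lambda_\emptyset^W$ (because $D_\infty\subseteq W$) and $0\le\Lambda_C^\emptyset-\Lambda_\emptyset^\emptyset\le\Lambda_W^\emptyset-\Lambda_\emptyset^\emptyset$ (because $C\subseteq W$), both vanishing along $(f_n)$. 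Hence $\inner{(\Lambda_{\tilde\sigma}^{D_\infty}-\Lambda_C^\emptyset)f_n,f_n}\to\infty$, and combining with $\Lambda_{D_0}^{D_\infty}\ge\Lambda_{\tilde\sigma}^{D_\infty}$ from Proposition~\ref{prop:main12}(i) contradicts $\Lambda_C^\emptyset\ge\Lambda_{D_0}^{D_\infty}$.

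The conducting case $D_\infty\not\subset C$ is fully symmetric. Extract $\tilde\sigma\in\X$ with $\tilde\sigma\subset D_\infty\setminus C$, choose $V\in\A$ with $D_0\Subset V$, $C\subseteq V$, and $W\in\A$ with $\tilde\sigma\Subset W$, $\dist(V,W)>0$, and apply Proposition~\ref{prop:locpot}(ii) with $\Sigma_0=D_0$, $\Sigma_\infty=\tilde\sigma$ to obtain $(g_n)$ with $\inner{(\Lambda_{D_0}^\emptyset-\Lambda_{D_0}^{\tilde\sigma})g_n,g_n}\to\infty$ and the two control limits (now involving $V$) tending to $0$. Mirroring the preceding decomposition for $\Lambda_\emptyset^C-\Lambda_{D_0}^{\tilde\sigma}$, and combining with $\Lambda_{D_0}^{\tilde\sigma}\ge\Lambda_{D_0}^{D_\infty}$ from Proposition~\ref{prop:main12}(ii), produces $\inner{(\Lambda_\emptyset^C-\Lambda_{D_0}^{D_\infty})g_n,g_n}\to\infty$, contradicting $\Lambda_{D_0}^{D_\infty}\ge\Lambda_\emptyset^C$.

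The main obstacle I anticipate is the \emph{geometric} construction of the buffer set $W$ (respectively $V$) that is $\A$-admissible, contains both $C$ and the opposite-type crack collection, and yet stays at positive distance from the localisation neighbourhood around $\tilde\sigma$. Because $\tilde\sigma$ is deliberately placed outside $C\cup D_\infty$ (resp.\ $C\cup D_0$), and because $C\Subset\Omega$, $\dist(D_0,D_\infty)>0$, and $\dist(D,\partial\Omega)>0$, this reduces to a routine Lipschitz thickening of $C$ together with a thickening of the opposite-type cracks, joined if necessary by a thin tube placed away from all cracks and from $\tilde\sigma$ to preserve the connected-complement requirement of $\A$; such a tube does not affect any of the monotonicity estimates above.
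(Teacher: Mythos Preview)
Your proof is correct and follows essentially the same approach as the paper: both argue by contraposition, split into the two cases according to whether a piece of $D_0$ or of $D_\infty$ lies outside $C$, apply Proposition~\ref{prop:locpot} with $(\Sigma_0,\Sigma_\infty)=(\tilde\sigma,D_\infty)$ or $(D_0,\tilde\sigma)$, and combine with the monotonicity inequalities of Propositions~\ref{prop:main1}~and~\ref{prop:main12} (and the volume-inclusion monotonicity $\Lambda_C^\emptyset\le\Lambda_W^\emptyset$, $\Lambda_\emptyset^C\ge\Lambda_\emptyset^V$) via the same three-term telescoping decomposition. Your explicit discussion of the geometric construction of $V,W\in\A$ is a point the paper merely asserts; otherwise the arguments coincide.
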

	\begin{proof}
		We will prove the contrapositive statement, i.e.~assume $D\not\subset C$. As $C$ is closed there is a relatively open part of $D$ that belongs to $\Omega\setminus C$. Now we will show that either $\Lambda_{C}^{\emptyset}\not\geq\Lambda_{D_0}^{D_\infty}$ or $\Lambda_{D_0}^{D_\infty}\not\geq \Lambda_{\emptyset}^C$. 
		
		Since $C$ has connected complement we have either of two cases, which will dictate which operator inequality that will fail to hold:
		\begin{enumerate}[(a)]
			\item There are $V,W\in \A$ with $\dist(V,W)>0$ and non-empty $\chi\in\X$, such that
			\begin{equation*}
				\chi\subseteq D_0, \qquad \chi\Subset V, \qquad C\subseteq W, \qquad \text{and} \qquad D_\infty\Subset W.
			\end{equation*}
			\item There are $V,W\in \A$ with $\dist(V,W)>0$ and non-empty $\chi\in\X$, such that 
			\begin{equation*}
				\chi\subseteq D_\infty, \qquad \chi \Subset W, \qquad C\subseteq V, \qquad \text{and} \qquad D_0\Subset V.
			\end{equation*}
		\end{enumerate}
		
		%\begin{figure}[htb]
			%\includegraphics[width = .5\textwidth]{fig1.pdf}
			%\caption{Illutration of the geometric setting in case (a) for dimension $d=2$.} \label{fig1}
		%\end{figure}
		
		Case (a): Since $C\cup D_\infty\subseteq W$ then \cite[Theorem~3.7]{Garde2020} and (the second inequality of) Proposition~\ref{prop:main1} give
		\begin{equation*}
			\Lambda_{C}^{\emptyset} \leq \Lambda_{W}^{\emptyset} \qquad \text{and} \qquad \Lambda_\emptyset^{D_\infty} \geq \Lambda_\emptyset^{W}.
		\end{equation*}
		As $\chi\subseteq D_0$ then Proposition~\ref{prop:main12}(i) gives
		\begin{equation*}
			\Lambda_{D_0}^{D_\infty} \geq \Lambda_{\chi}^{D_\infty}.
		\end{equation*}
		Hence we have
		\begin{align*}
			\Lambda_{C}^{\emptyset} - \Lambda_{D_0}^{D_\infty} &= (\Lambda_{C}^{\emptyset} - \Lambda_\emptyset^\emptyset) + (\Lambda_{\emptyset}^\emptyset - \Lambda_\emptyset^{D_\infty}) + (\Lambda_\emptyset^{D_\infty} - \Lambda_{D_0}^{D_\infty}) \\
			&\leq (\Lambda_{W}^{\emptyset} - \Lambda_\emptyset^\emptyset) + (\Lambda_{\emptyset}^\emptyset - \Lambda_\emptyset^{W}) + (\Lambda_\emptyset^{D_\infty} - \Lambda_{\chi}^{D_\infty}).
		\end{align*}
		Now, since $\emptyset\neq\chi\Subset V$ and $D_\infty\Subset W$, Proposition~\ref{prop:locpot}(i) gives the existence of a sequence $(f_n)$ in $L^2_\diamond(\Gamma)$ such that
		\begin{equation*}
			\lim_{n\to\infty} \inner{(\Lambda_{C}^{\emptyset} - \Lambda_{D_0}^{D_\infty})f_n,f_n} = -\infty,
		\end{equation*}
		and in particular we conclude that $\Lambda_{C}^{\emptyset}\not\geq \Lambda_{D_0}^{D_\infty}$. 
		
		Case (b): Since $C\cup D_0\subseteq V$ then \cite[Theorem~3.7]{Garde2020} and (the first inequality of) Proposition~\ref{prop:main1} gives
		\begin{equation*}
			\Lambda_\emptyset^C \geq \Lambda_{\emptyset}^V \qquad \text{and} \qquad \Lambda_{D_0}^{\emptyset} \leq \Lambda_V^{\emptyset}.
		\end{equation*}
		As $\chi\subseteq D_\infty$ then Proposition~\ref{prop:main12}(ii) gives
		\begin{equation*}
			\Lambda_{D_0}^{D_\infty} \leq \Lambda_{D_0}^{\chi}.
		\end{equation*}
		Hence we have
		\begin{align*}
			\Lambda_{D_0}^{D_\infty} - \Lambda_\emptyset^C &= (\Lambda_{D_0}^{D_\infty} - \Lambda_{D_0}^{\emptyset}) + (\Lambda_{D_0}^{\emptyset} - \Lambda_\emptyset^\emptyset) + (\Lambda_\emptyset^\emptyset - \Lambda_\emptyset^C) \\
			&\leq (\Lambda_{D_0}^{\chi} - \Lambda_{D_0}^{\emptyset}) + (\Lambda_{V}^{\emptyset} - \Lambda_\emptyset^\emptyset) + (\Lambda_\emptyset^\emptyset - \Lambda_\emptyset^V).
		\end{align*}
		Now, since $\emptyset\neq\chi\Subset W$ and $D_0\Subset V$, Proposition~\ref{prop:locpot}(ii) gives the existence of a sequence $(g_n)$ in $L^2_\diamond(\Gamma)$ such that
		\begin{equation*}
			\lim_{n\to\infty} \inner{(\Lambda_{D_0}^{D_\infty} - \Lambda_\emptyset^C)g_n,g_n} = -\infty,
		\end{equation*}
		and in particular we conclude that $\Lambda_{D_0}^{D_\infty}\not\geq \Lambda_\emptyset^C$.
	\end{proof}
	
	\begin{proposition} \label{prop:main22}
		Let $D\in \X$.
		\begin{enumerate}[\rm(i)]
			\item Given any $\chi\in\X$, then
			\begin{equation*}
				\Lambda_{D}^{\emptyset} \geq \Lambda_{\chi}^{\emptyset} \qquad \text{implies} \qquad \chi\subseteq D.
			\end{equation*}
			\item Given any $\chi\in\X$, then
			\begin{equation*}
				\Lambda_{\emptyset}^{\chi} \geq \Lambda_{\emptyset}^{D} \qquad \text{implies} \qquad \chi\subseteq D.
			\end{equation*}
		\end{enumerate}
	\end{proposition}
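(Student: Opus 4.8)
The plan is to prove both (i) and (ii) by contraposition, following the template of Proposition~\ref{prop:main2} but in the simpler situation where only one type of extreme behaviour occurs. All the analytic substance is already packaged in Proposition~\ref{prop:locpot}; what remains is to install the right geometry and to chain the monotonicity inequalities of Sections~\ref{sec:forward}--\ref{sec:proof_main1} so that the divergent term wins.

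For (i) I would start from $\chi\not\subseteq D$. Since the surfaces constituting $D$ are compact with boundaries inside $\Omega$, the set $D$ is closed, so $\chi\setminus D$ is non-empty and relatively open in $\chi$; picking a point of it and a sufficiently small Lipschitz patch around it produces a non-empty $\chi_0\in\X$ with $\chi_0\subseteq\chi$ and $\dist(\chi_0,D)>0$ (and automatically $\dist(\chi_0,\partial\Omega)>0$). Next I would choose $V,W\in\A$ with $\chi_0\Subset V$, $D\subseteq W$ and $\dist(V,W)>0$ (a thin closed neighbourhood around $\chi_0$ and around $D$). Then Proposition~\ref{prop:main1} applied to the $(D,\emptyset)$ collection gives $\Lambda_D^\emptyset\leq\Lambda_W^\emptyset$, and Proposition~\ref{prop:main12}(i) applied to the $(\chi,\emptyset)$ collection with artificial crack $\chi_0$ gives $\Lambda_\chi^\emptyset\geq\Lambda_{\chi_0}^\emptyset$. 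Finally Proposition~\ref{prop:locpot}(i), applied to the $(\chi_0,\emptyset)$ collection with this $V$ and $W$ (here $\Sigma_0=\chi_0\neq\emptyset$ and $\Sigma_\infty=\emptyset\Subset W$ trivially), supplies $(f_n)\subseteq L^2_\diamond(\Gamma)$ with $\inner{(\Lambda_W^\emptyset-\Lambda_\emptyset^\emptyset)f_n,f_n}\to 0$ and $\inner{(\Lambda_{\chi_0}^\emptyset-\Lambda_\emptyset^\emptyset)f_n,f_n}\to\infty$, whence
\begin{equation*}
\inner{(\Lambda_D^\emptyset-\Lambda_\chi^\emptyset)f_n,f_n}\leq\inner{(\Lambda_W^\emptyset-\Lambda_\emptyset^\emptyset)f_n,f_n}-\inner{(\Lambda_{\chi_0}^\emptyset-\Lambda_\emptyset^\emptyset)f_n,f_n}\longrightarrow-\infty,
\end{equation*}
contradicting $\Lambda_D^\emptyset\geq\Lambda_\chi^\emptyset$.

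Part (ii) would be the mirror image. From $\chi\not\subseteq D$ extract $\chi_0\in\X$ with $\chi_0\subseteq\chi$ and $\dist(\chi_0,D)>0$ as above, and this time choose $V,W\in\A$ with $D\subseteq V$, $\chi_0\Subset W$ and $\dist(V,W)>0$. Proposition~\ref{prop:main1} applied to the $(\emptyset,D)$ collection gives $\Lambda_\emptyset^D\geq\Lambda_\emptyset^V$, and Proposition~\ref{prop:main12}(ii) applied to the $(\emptyset,\chi)$ collection (artificial crack $\chi_0$) gives $\Lambda_\emptyset^\chi\leq\Lambda_\emptyset^{\chi_0}$. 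Proposition~\ref{prop:locpot}(ii), applied to the $(\emptyset,\chi_0)$ collection with this $V$ and $W$ ($\Sigma_\infty=\chi_0\neq\emptyset$, $\Sigma_0=\emptyset$), yields $(g_n)\subseteq L^2_\diamond(\Gamma)$ with $\inner{(\Lambda_\emptyset^\emptyset-\Lambda_\emptyset^V)g_n,g_n}\to 0$ and $\inner{(\Lambda_\emptyset^\emptyset-\Lambda_\emptyset^{\chi_0})g_n,g_n}\to\infty$, so that
\begin{equation*}
\inner{(\Lambda_\emptyset^\chi-\Lambda_\emptyset^D)g_n,g_n}\leq\inner{(\Lambda_\emptyset^{\chi_0}-\Lambda_\emptyset^\emptyset)g_n,g_n}+\inner{(\Lambda_\emptyset^\emptyset-\Lambda_\emptyset^V)g_n,g_n}\longrightarrow-\infty,
\end{equation*}
contradicting $\Lambda_\emptyset^\chi\geq\Lambda_\emptyset^D$.

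The hard part is not really analytic: it is the first step, namely making rigorous that $\chi\not\subseteq D$ produces a genuine protruding patch $\chi_0\in\X$ with $\dist(\chi_0,D)>0$, and arranging the nested test inclusions $V,W\in\A$ so that all hypotheses of Proposition~\ref{prop:locpot} are met simultaneously; once that is in place the remainder is bookkeeping with the monotonicity inequalities. Note that, unlike in Proposition~\ref{prop:main2}, there is no second crack of the opposite type to carry along, so \cite[Theorem~3.7]{Garde2020} is not needed here.
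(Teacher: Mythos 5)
Your proposal is correct and matches the paper's proof essentially step for step: both argue by contraposition, extract a protruding patch $\chi'$ (your $\chi_0$) of $\chi$ separated from $D$, enclose it in $V$ and $D$ in $W$ with $\dist(V,W)>0$, chain the monotonicity inequalities from Propositions~\ref{prop:main1} and \ref{prop:main12}, and feed Proposition~\ref{prop:locpot} with the degenerate collection $(\chi_0,\emptyset)$ (resp.\ $(\emptyset,\chi_0)$) to produce the blow-up sequence. Your side remark that \cite[Theorem~3.7]{Garde2020} is not needed here, unlike in Proposition~\ref{prop:main2}, is also accurate, since $D$ has measure zero and Proposition~\ref{prop:main1} already gives $\Lambda_D^\emptyset\leq\Lambda_W^\emptyset$.
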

	\begin{proof}
		We will prove the contrapositive statements, i.e.,\ assume $\chi\not\subseteq D$.
		
		Proof of (i): We need to show that $\Lambda_{D}^{\emptyset} \not\geq \Lambda_{\chi}^{\emptyset}$. There exist $V,W\in\A$ with $\dist(V,W)>0$ and a non-empty $\chi'\in\X$, such that 
		\begin{equation*}
			\chi'\subseteq \chi, \qquad \chi'\Subset V, \qquad \text{and} \qquad D \subset W.
		\end{equation*}
		Using Propositions~\ref{prop:main1} and \ref{prop:main12}(i) we have
		\begin{equation*}
			\Lambda_{D}^{\emptyset} \leq \Lambda_{W}^{\emptyset} \quad \text{and} \quad \Lambda_{\chi}^{\emptyset} \geq \Lambda_{\chi'}^{\emptyset}.
		\end{equation*}
		Hence
		\begin{equation*}
				\Lambda_{D}^{\emptyset} - \Lambda_{\chi}^{\emptyset} = (\Lambda_{D}^{\emptyset} - \Lambda_{\emptyset}^{\emptyset}) + (\Lambda_{\emptyset}^{\emptyset} - \Lambda_{\chi}^{\emptyset}) \leq (\Lambda_{W}^{\emptyset} - \Lambda_{\emptyset}^{\emptyset}) + (\Lambda_{\emptyset}^{\emptyset} - \Lambda_{\chi'}^{\emptyset}).
		\end{equation*}
		Now, since $\emptyset \neq \chi' \Subset V$, Proposition~\ref{prop:locpot}(i) gives the existence of a sequence $(f_n)$ in $L^2_\diamond(\Gamma)$ such that
		\begin{equation*}
			\lim_{n\to\infty} \inner{(\Lambda_{D}^{\emptyset} - \Lambda_{\chi}^{\emptyset})f_n,f_n} = -\infty,
		\end{equation*}
		and in particular we conclude that $\Lambda_{D}^{\emptyset} \not\geq \Lambda_{\chi}^{\emptyset}$. 
		
		Proof of (ii): We need to show that $\Lambda_{\emptyset}^{\chi} \not\geq \Lambda_{\emptyset}^{D}$. There exist $V,W\in\A$ with $\dist(V,W)>0$ and a non-empty $\chi'\in\X$, such that 
		\begin{equation*}
			\chi'\subseteq \chi, \qquad \chi'\Subset W, \qquad \text{and} \qquad D \subset V.
		\end{equation*}
		Using Propositions~\ref{prop:main1} and \ref{prop:main12}(ii) we have
		\begin{equation*}
			\Lambda_{\emptyset}^{D} \geq \Lambda_{\emptyset}^{V} \qquad \text{and} \qquad \Lambda_{\emptyset}^{\chi} \leq \Lambda_{\emptyset}^{\chi'}.
		\end{equation*}
		Hence
		\begin{equation*}
			\Lambda_{\emptyset}^{\chi} - \Lambda_{\emptyset}^{D} = (\Lambda_{\emptyset}^{\chi} - \Lambda_{\emptyset}^{\emptyset}) + (\Lambda_{\emptyset}^{\emptyset} - \Lambda_{\emptyset}^{D}) \leq (\Lambda_{\emptyset}^{\chi'} - \Lambda_{\emptyset}^{\emptyset}) + (\Lambda_{\emptyset}^{\emptyset} - \Lambda_{\emptyset}^{V}).
		\end{equation*}
		Now, since $\emptyset \neq \chi' \Subset W$, Proposition~\ref{prop:locpot}(ii) gives the existence of a sequence $(g_n)$ in $L^2_\diamond(\Gamma)$ such that
		\begin{equation*}
			\lim_{n\to\infty} \inner{(\Lambda_{\emptyset}^{\chi} - \Lambda_{\emptyset}^{D})g_n,g_n} = -\infty,
		\end{equation*}
		and in particular we conclude that $\Lambda_{\emptyset}^{\chi} \not\geq \Lambda_{\emptyset}^{D}$. 
	\end{proof}
	
	\subsection*{Acknowledgements}
	
	The research of HG was partially supported by grant 8021--00084B from Independent Research Fund Denmark \textbar\ Natural Sciences. The research of MSV was partially supported by NSF grant DMS-22-05912.
	
	\bibliographystyle{plain}

\begin{thebibliography}{10}
		
		\bibitem{Alessandrini2012}
		G.~Alessandrini.
		\newblock Strong unique continuation for general elliptic equations in 2D.
		\newblock {\em J. Math. Anal.}, 386(2):669--676, 2012.	
		
		\bibitem{Alessandrini97}
		G.~Alessandrini and E.~DiBenedetto.
		\newblock Determining 2-dimensional cracks in 3-dimensional bodies: uniqueness and stability.
		\newblock {\em Indiana Univ. Math. J.}, 46(1):1--82, 1997.
		
		\bibitem{BruhlHankePidcock2001}
		M.~Br\"uhl, M.~Hanke, and M.~Pidcock.
		\newblock Crack detection using electrostatic measurements.
		\newblock {\em ESAIM: M2AN}, 35(3):595--605, 2001.
		
		\bibitem{BryanVogelius92}
		K.~Bryan and M.~S.~Vogelius.
		\newblock A uniqueness result concerning the identification of a collection of cracks from finitely many electrostatic boundary measurements.
		\newblock {\em SIAM J. Math. Anal.}, 23:950--958, 1992.
		
		\bibitem{Calderon1980}
		A.~P.~Calder{\'o}n.
		\newblock On an inverse boundary value problem.
		\newblock In {\em Seminar on {N}umerical {A}nalysis and its {A}pplications to {C}ontinuum {P}hysics}, pages 65--73. Soc. Brasil. Mat., Rio de Janeiro, 1980.	
		
		\bibitem{Garde2020}
		V.~Candiani, J.~Dard\'e, H.~Garde, and N.~Hyv{\"o}nen.
		\newblock Monotonicity-based reconstruction of extreme inclusions in electrical impedance tomography.
		\newblock {\em SIAM J. Math. Anal.}, 52(6):6234--6259, 2020.
		
		\bibitem{Ciarlet1978}
		P.~Ciarlet.
		\newblock {\em The Finite Element Method for Elliptic Problems}.
		\newblock North-Holland, Amsterdam, 1978.
		
		\bibitem{folland}
		G.~B.~Folland.
		\newblock {\em Real Analysis: Modern Techniques and Their Applications}.
		\newblock John Wiley \& Sons, Inc., Second Edition, 1999.
		
		\bibitem{FriedmanVogelius89}
		A.~Friedman and M.~S.~Vogelius.
		\newblock Determining cracks by boundary measurements.
		\newblock {\em Indiana Univ. Math. J.}, 38(2):497--525, 1989.
		
		\bibitem{Garde2020b}
		H.~Garde.
		\newblock Reconstruction of piecewise constant layered conductivities in electrical impedance tomography.
		\newblock {\em Commun. Partial Differ. Equ.}, 45(9):1118--1133, 2020.
		
		\bibitem{Garde2022}
		H.~Garde.
		\newblock Simplified reconstruction of layered materials in EIT.
		\newblock {\em Appl. Math. Lett.}, 126(5) article no.~107815, 2022.
		
		\bibitem{Garde2022b}
		H.~Garde and N.~Hyv\"onen.
		\newblock Reconstruction of singular and degenerate inclusions in {C}alder\'on's problem.
		\newblock {\em Inverse Probl. Imag.}, 16(5):1219--1227, 2022.
		
		\bibitem{GardeStaboulis2016}
		H.~Garde and S.~Staboulis.
		\newblock Convergence and regularization for monotonicity-based shape reconstruction in electrical impedance tomography.
		\newblock {\em Numer. Math.}, 135(4):1221--1251, 2017.	
			
		\bibitem{GardeStaboulis2019}
		H.~Garde and S.~Staboulis.
		\newblock The regularized monotonicity method: detecting irregular indefinite inclusions.
		\newblock {\em Inverse Probl. Imag.}, 13(1):93--116, 2019.
		
		\bibitem{Gebauer2008b}
		B.~Gebauer.
		\newblock Localized potentials in electrical impedance tomography.
		\newblock {\em Inverse Probl. Imag.}, 2(2):251--269, 2008.
		
		\bibitem{Grubb}
		G.~Grubb.
		\newblock {\em Distributions and Operators}.
		\newblock Springer, New York, 2009.
		
		\bibitem{Harrach19}
		B.~Harrach.
		\newblock Uniqueness and {L}ipschitz stability in electrical impedance tomography with finitely many electrodes.
		\newblock {\em Inverse Problems}, 35(2) article no.~024005, 2019.
		
		\bibitem{Harrach23}
		B.~Harrach.
		\newblock The {C}alder\'on problem with finitely many unknowns is equivalent to convex semidefinite optimization.
		\newblock To appear in {\em SIAM J. Math. Anal.}
		
		\bibitem{Harrach10}
		B.~Harrach and J.~K. Seo.
		\newblock Exact shape-reconstruction by one-step linearization in electrical impedance tomography.
		\newblock {\em SIAM J. Math. Anal.}, 42(4):1505--1518, 2010.
		
		\bibitem{Harrach13}
		B.~Harrach and M.~Ullrich.
		\newblock Monotonicity-based shape reconstruction in electrical impedance tomography.
		\newblock {\em SIAM J. Math. Anal.}, 45(6):3382--3403, 2013.
		
		\bibitem{Harrach15}
		B.~Harrach and M.~Ullrich.
		\newblock Resolution guarantees in electrical impedance tomography.
		\newblock {\em IEEE T. Med. Imaging}, 34(7):1513--1521, 2015.
		
		\bibitem{Ikehata08}
		M.~Ikehata.
		\newblock The enclosure method for an inverse crack problem and the Mittag--Leffler function.
		\newblock {\em Inverse Problems}, 24 article no.~015006, 2008.
		
		\bibitem{Ikehata2023}
		M.~Ikehata.
		\newblock Extracting discontinuity using the probe and enclosure methods.
		\newblock {\em J. Inverse Ill-posed Probl.}, 31(4):487--575, 2023.
		
		\bibitem{Jerison85}
		D.~Jerison and C.~E.~Kenig.
		\newblock Unique continuation and absence of positive eigenvalues for Schr\"odinger operators.
		\newblock {\em Ann. Math.}, 121(3):463--488, 1985.
		
		\bibitem{KohnVogelius85}
		R.~Kohn and M.~S.~Vogelius.
		\newblock Determining conductivity by boundary measurements II. Interior results.
		\newblock {\em Commun. Pure Appl. Math.}, 38:643--667, 1985.
		
		\bibitem{Lieb2003}
		E.~Lieb, R.~Seiringer, and J.~Yngvason.
		\newblock Poincar\'e inequalities in punctured domains.
		\newblock {\em Ann. Math.}, 158(3):1067--1080, 2003.
		
		\bibitem{McLean}
		W.~McLean.
		\newblock {\em Strongly Elliptic Systems and Boundary Integral Equations}.
		\newblock Cambridge University Press, 2000.
		
		\bibitem{Miller74}
		K.~Miller.
		\newblock Nonunique continuation for uniformly parabolic and elliptic equations in self-adjoint divergence form with H\"older continuous coefficients.
		\newblock {\em Arch. Ration. Mech. Anal.}, 54:105--117, 1974.
		
		\bibitem{Tamburrino2002}
		A.~Tamburrino and G.~Rubinacci.
		\newblock A new non-iterative inversion method for electrical resistance tomography.
		\newblock {\em Inverse Problems}, 18(6):1809--1829, 2002.
		
		\bibitem{Uhlmann2009}
		G.~Uhlmann.
		\newblock Electrical impedance tomography and {C}alder\'on's problem.
		\newblock {\em Inverse Problems}, 25(12) article no.~123011, 2009.
		
		\bibitem{Verchota1984}
		G.~Verchota.
		\newblock Layer potentials and regularity for the {D}irichlet problem for {L}aplace's equation in {L}ipschitz domains.
		\newblock {\em J. Funct. Anal.}, 59(3):572--611, 1984.
		
	\end{thebibliography}

\end{document}